\newtheorem{theorem}{Theorem}
\theoremstyle{plain}
\newtheorem{conjecture}{Conjecture}
\newtheorem{corollary}{Corollary}
\newtheorem{definition}{Definition}
\newtheorem{lemma}{Lemma}
\newtheorem{proposition}{Proposition}
\newtheorem{remark}{Remark}
\numberwithin{equation}{section}
\begin{document}
\title[Sampling and interpolation of polyanalytic functions]{Sampling and
interpolation in Bargmann-Fock spaces of polyanalytic functions}
\author{Lu\'{\i}s Daniel Abreu}
\address{CMUC, Department of Mathematics of University of Coimbra, School of
Science and Technology (FCTUC) 3001-454 Coimbra, Portugal \\
and NuHAG, Faculty of Mathematics, University of Vienna, Nordbergstrasse 15,
A-1090 Wien, Austria}
\email{daniel@mat.uc.pt}
\urladdr{http://www.mat.uc.pt/\symbol{126}daniel/}
\thanks{}
\date{November, 2009}
\subjclass{30H05, 41A05, 42C15}
\keywords{time-frequency analysis, polyanalytic functions, Gabor frames and
super frames, Bargmann transform, frame density, Fock spaces, sampling,
interpolation, Nyquist rate, super Gabor transform}
\thanks{This research was partially supported by CMUC/FCT and FCT
post-doctoral grant SFRH/BPD/26078/2005, POCI 2010 and FSE}

\begin{abstract}
Using Gabor analysis, we give a complete characterization of all lattice
sampling and interpolating sequences in the Fock space of polyanalytic
functions, displaying a "Nyquist rate" which increases with $n$, the degree
of polyanaliticity of the space. Such conditions are equivalent to sharp
lattice density conditions for certain vector-valued Gabor systems, namely
superframes and Gabor super-Riesz sequences with Hermite windows, and in the
case of superframes they were studied recently by Gr\"{o}chenig and
Lyubarskii. The proofs of our main results use variations of the
Janssen-Ron-Shen duality principle and reveal a duality between sampling and
interpolation in polyanalytic spaces, and multiple interpolation and
sampling in analytic spaces. To connect these topics we introduce the\emph{\
polyanalytic Bargmann transform}, a unitary mapping between vector valued
Hilbert spaces and polyanalytic Fock spaces, which extends the Bargmann
transform to polyanalytic spaces. Motivated by this connection, we discuss a
vector-valued version of the Gabor transform. These ideas have natural
applications in the context of multiplexing of signals. We also point out
that a recent result of Balan, Casazza and Landau, concerning density of
Gabor frames, has important consequences for the Gr\"{o}chenig-Lyubarskii
conjecture on the density of Gabor frames with Hermite windows.
\end{abstract}

\maketitle

\section{Introduction}

In this paper we find a new, and perhaps unexpected, connection between
polyanalytic functions and time-frequency analysis, and use it to obtain a
complete characterization of all lattice sampling and interpolating
sequences in the Bargmann-Fock space of polyanalytic functions or,
equivalently, of all lattice vector-valued Gabor frames and vector-valued
Gabor Riesz sequences for $L^{2}(%
\mathbb{R}
,%
\mathbb{C}
^{n})$.

\subsection{Overview}

The Bargmann-Fock space of polyanalytic functions, $\mathbf{F}^{n}(%
\mathbb{C}
^{d})$, consists of all functions satisfying the equation
\begin{equation}
\left( \frac{d}{d\overline{z}}\right) ^{n}F(z)=0\text{,}
\label{defpolyanalytic}
\end{equation}%
and such that%
\begin{equation}
\int_{%
\mathbb{C}
^{d}}\left\vert F(z)\right\vert ^{2}e^{-\pi \left\vert z\right\vert
^{2}}dz<\infty \text{.}  \label{growth}
\end{equation}%
\emph{\ }Functions satisfying (\ref{defpolyanalytic}) are known as \emph{%
polyanalytic functions} of order $n$. Since (\ref{defpolyanalytic})
generalizes the Cauchy-Riemann equation
\begin{equation*}
\frac{d}{d\overline{z}}F(z)=0\text{,}
\end{equation*}%
then the space $\mathbf{F}^{n}(%
\mathbb{C}
^{d})$ is a generalization of the Bargmann-Fock space of analytic functions,
$\mathcal{F}(%
\mathbb{C}
^{d})=\mathbf{F}^{1}(%
\mathbb{C}
^{d})$. In the case of $\mathcal{F}(%
\mathbb{C}
)$, a complete description of the sets of sampling and interpolation is
known \cite{Ly},\cite{S1},\cite{SW}.

Polyanalytic functions inherit some of the properties of analytic functions,
often in a nontrivial form. However, as in the theory of several complex
variables, many of the properties break down once we leave the analytic
setting. An obvious difference lies in the structure of the zeros. For
instance, while nonzero entire functions do not have sets of zeros with an
accumulation point, polyanalytic functions can vanish along closed curves:
just take $F(z)=\overline{z}z-1=\left\vert z\right\vert ^{2}-1$, a
polyanalytic function of order 2. Polyanalytic functions have been
investigated thoroughly, notably by Balk and his students \cite{Balk}. They
are naturally related to polyharmonic functions, which have an intriguing
structure of zero sets \cite{HayKor}, \cite{BalkMazalov}, \cite{Render}.

We will study the spaces $\mathbf{F}^{n}(%
\mathbb{C}
^{d})$ using time-frequency analysis, and think about the polyanalytic
functions in a different way from the classical classical approach.

The link to time-frequency analysis has impressive consequences: by endowing
the spaces $\mathbf{F}^{n}(%
\mathbb{C}
^{d})$ with the structure inherent to translations and modulations, one can
use tools that were unavailable with complex variables. This will provide $%
\mathbf{F}^{n}(%
\mathbb{C}
^{d})$ with properties reminiscent of the classical analytic Fock space.

In order to state our main results, we briefly recall some definitions. See
sections 4 and 5 for more details. We associate the sequence $\Lambda
=\{(x,w)\}$ with the sequence of complex numbers $\Gamma =\{(x+iw)\}$. Then $%
\Gamma $ is \emph{an interpolating sequence} for $\mathbf{F}^{n}(%
\mathbb{C}
^{d})$ if, for every sequence $\{\alpha _{i,j}\}\in l^{2}$, there exists $%
F\in \mathbf{F}^{n}(%
\mathbb{C}
^{d})$ such that
\begin{equation*}
e^{i\pi x\omega -\frac{\pi }{2}\left\vert z\right\vert ^{2}}F(z)=\alpha
_{i,j},
\end{equation*}%
for every $z\in \Gamma .$ We say that $\Gamma $ is \emph{a} \emph{sampling
sequence} for $\mathbf{F}^{n}(%
\mathbb{C}
^{d})$\ if there exist $A,B>0$ such that, for every $F\in \mathbf{F}^{n}(%
\mathbb{C}
^{d})$,%
\begin{equation*}
A\left\Vert F\right\Vert _{\mathbf{F}^{n}(%
\mathbb{C}
^{d})}^{2}\leq \sum_{z\in \Gamma }\left\vert F(z)\right\vert ^{2}e^{-\pi
\left\vert z\right\vert ^{2}}\leq B\left\Vert F\right\Vert _{\mathbf{F}^{n}(%
\mathbb{C}
^{d})}^{2}.
\end{equation*}

The concept of interpolating sequences has its roots in deep problems in
complex analysis, and sampling sequences are a major issue in signal
processing, since they correspond to the case where stable numerical
reconstructions of a function \ from its samples is possible. Monograph \cite%
{Seipmonograph} is a good introduction to sampling and interpolation and its
interconnections with other branches of pure and applied mathematics.

Our main results, Theorem 4 and Theorem 6 below, use the concept of Beurling
density, which, in the lattice case, is given by $D(\Gamma )=D(\Lambda
)=\left\vert \det A\right\vert ^{-1}$, where $\Lambda =A%
\mathbb{Z}
^{2}$.%
\begin{equation*}
\end{equation*}%
\textbf{Theorem 4. }The lattice $\Gamma $ is a sampling sequence for $%
\mathbf{F}^{n}(%
\mathbb{C}
)$ if and only if:
\begin{equation*}
D(\Gamma )>n.
\end{equation*}%
\textbf{Theorem 6. }The lattice $\Gamma $ is an interpolating sequence for $%
\mathbf{F}^{n}(%
\mathbb{C}
)$ if and only if:
\begin{equation*}
D(\Gamma )<n.
\end{equation*}

These results follow by establishing one duality between sampling in $%
\mathbf{F}^{n}(%
\mathbb{C}
^{d})$ and multiple interpolation in $\mathcal{F}(%
\mathbb{C}
^{d})$ and a second duality between interpolation in $\mathbf{F}^{n}(%
\mathbb{C}
^{d})$ and multiple sampling in $\mathcal{F}(%
\mathbb{C}
^{d})$. When $d=1$, these properties allow us to directly apply the results
in \cite{Brekkeseip}. Taking $n=1$ we recover the well known duality between
sampling and interpolation in $\mathcal{F}(%
\mathbb{C}
^{d})$.

Theorems exhibiting a "Nyquist rate" phenomenon tend to be hard to prove.
They have been studied, for general sequences, in spaces of analytic
functions, first in the Paley-Wiener space \cite{Beurling},\cite{Landauacta},%
\cite{LandauIEE} and then in Bargmann-Fock \cite{Ly},\cite{S1},\cite{SW} and
Bergman \cite{Seip2} spaces of analytic functions. There are two reasons for
us to believe that Beurling type methods do not work here. First, they use
complex variables tools that are not available in the polyanalytic
situation. Second, in all of these spaces, there were known "doubly
orthogonal systems" \cite{Seipdoubly}, which provided the eigenfunctions for
the fundamental equation involving the "concentration operator". We are not
aware of a system with this double orthogonality property in the
polyanalytic situation.

Therefore, we introduce new tools.

With a view to relating our problem to one concerning the density of vector
valued Gabor systems, we extend Bargmann%
\'{}%
s work \cite{Bar} to the setting of polyanalytic functions. Once we do this,
our argument, which is conceptual in nature, follows smoothly.

It is also worth noting that the density Theorem in Gabor analysis has
itself a very rich story, beginning with fundamental but imprecise
statements by John Von Neumann and Dennis Gabor, which caught the attention
of mathematicians after conjectures by Daubechies and Grossman \cite%
{DauGross}. See the survey article \cite{Heil}, \cite{CHO} and the important
special windows studied in \cite{JanStr}.

To give a context to our approach, recall the connection between the
classical (analytic) Bargmann-Fock space and time-frequency analysis.

It is well known that, up to a certain weight, the Gabor transform with a
gaussian window belongs to the Fock space of analytic functions. Moreover,
it has been shown that this is the only choice leading to spaces of analytic
functions \cite{AscensiBruna}.

However, a nice picture emerges when we take Hermite functions as windows.
The analytic situation generated by the gaussian window then becomes the tip
of the iceberg of a larger structure involving spaces of polyanalytic
functions. Indeed, the Gabor transform with the $nth$ Hermite function is,
up to a certain weight (the same as in the analytic case), a polyanalytic
function of order $n+1$.

To fully understand the situation, we will need the spaces constituted by
the functions satisfying (\ref{growth}), which are polyanalytic of order $n$%
, but are \emph{not} polyanalytic of any lower order (in particular they
have no analytic functions). These are the \emph{true} polyanalytic Fock
spaces $\mathcal{F}^{n}(%
\mathbb{C}
^{d})$. The polyanalytic Fock and true polyanalytic Fock spaces are related
by the following orthogonal decomposition (see Corollary 1 in section 3):%
\begin{equation*}
\mathbf{F}^{n}(%
\mathbb{C}
^{d})=\mathcal{F}^{0}(%
\mathbb{C}
^{d})\oplus ...\oplus \mathcal{F}^{n-1}(%
\mathbb{C}
^{d})\text{.}
\end{equation*}

Then, each space $\mathcal{F}^{n}(%
\mathbb{C}
^{d})$ is associated with Gabor transforms with the $nth$ Hermite window.
Such occurrence, which seems to have been hitherto unnoticed, will be
fundamental in our discussion. This observation is related to some recent
developments in Gabor analysis with Hermite functions \cite{CharlyYura},\cite%
{CharlyYurasuper},\cite{Fuhr}, to Janssen%
\'{}%
s approach to the density Theorem \cite{Janssen},\cite{J3} and also to the
techniques used in \cite{Hutnikcr},\cite{Hutnikm},\cite{VasiBergman}, which
suggest that wavelet spaces and polyanalytic functions share intriguing
patterns.

Fock spaces of polyanalytic functions are briefly mentioned in Balk%
\'{}%
s monograph \cite{Balk} and they are implicit in quantum mechanics, in
connection with the Landau levels of the Schr\"{o}dinger operator with
magnetic field \cite{Shigekawa},\cite{LuefGosson} and displaced Fock states
\cite{Wunsche}. However, we were not able to find any reference to
polyanalytic functions in the mathematical physics literature, apart from
\cite{VasiFock}, where creation and annihilation operators are used.

\subsection{The results of Gr\"{o}chenig-Lyubarskii and of
Balan-Casazza-Landau}

Our results are connected to a very recent result of Gr\"{o}chenig and
Lyubarskii, which deserves more specific comment in this introduction.
Denote by $G(\mathbf{h}_{n},\Lambda )$ the set of the translations and
modulations indexed by the lattice $\Lambda $ and acting coordinate-wise on
the vector $\mathbf{h}_{n}.$ Then, the following result holds.

\textbf{Theorem }\cite{CharlyYurasuper}: \emph{Let }$\mathbf{h}%
_{n}=(h_{0},...,h_{n-1})$\emph{\ be the vector of the first }$n$\emph{\
Hermite functions. Then }$G(\mathbf{h}_{n},\Lambda )$\emph{\ is a frame for }%
$L^{2}(%
\mathbb{R}
,%
\mathbb{C}
^{n})$\emph{\ if and only if }%
\begin{equation*}
D(\Lambda )>n.
\end{equation*}%
One may wonder if the equivalence of this condition to the one in Theorem 4
reflects causality or casuality. As we will see, \emph{causality} is the
answer. Actually, one of the key steps in our approach consists of showing
that Theorem 4 is equivalent to the above Theorem.

The original proof of this Theorem in \cite{CharlyYurasuper} combines the
use of the so-called Wexler-Raz biorthogonality relations with complex
analysis techniques based on properties of the Weierstrass sigma function.
We will give an alternative proof, which is considerably shorter (at the
cost of using deeper results from the literature) and has the advantage of
also characterizing the vector valued Gabor Riesz sequences with Hermite
windows in the following statement, which is equivalent to Theorem 6:

\textbf{Theorem 7: }$\mathcal{G}(\mathbf{h}_{n},\Lambda )$ \emph{is a Riesz
sequence for }$L^{2}(%
\mathbb{R}
,%
\mathbb{C}
^{n})$\emph{\ if and only if }%
\begin{equation*}
D(\Gamma )<n\emph{.}
\end{equation*}

One should also remark that the necessity of the condition $D(\Lambda )\geq
n $ for vector valued frames, follows from a result of Balan \cite%
{BalanDensity}. Moreover, this condition holds for general sets, since a
Ramanathan-Steger \cite{RS} \ type argument is used.

Now let us look closer at the problem of deciding when the translations and
modulations of a \emph{single Hermite }function constitute a frame.

It can be easily seen as a corollary of the above Theorem that $D(\Lambda
)>n $ is sufficient for the system $G(h_{n-1},\Lambda )$ to be a frame. It
has been observed by Gr\"{o}chenig and Lyubarskii that there are some
examples which support the intriguing conjecture that such a result might be
sharp.

A recent result of Balan, Casazza and Landau shows that this conjecture
cannot hold for general sets. Let $S_{0}$ stand for the \emph{Feichtinger
algebra }\cite{Fei}.

\textbf{Theorem \cite{BCL} }\emph{Assume }$G(g,\Lambda )$\emph{\ is a Gabor
frame for }$L^{2}\left(
\mathbb{R}
^{d}\right) $\emph{\ with }$g\in S_{0}$\emph{. Then, for every }$\epsilon >0$%
\emph{, there exists a subset }$J_{\epsilon }\subset \Lambda $\emph{\ so
that }$G(g,J_{\epsilon })$\emph{\ is a Gabor frame for }$L^{2}\left(
\mathbb{R}
^{d}\right) $\emph{\ and its upper Beurling density satisfies }$%
D^{+}(J_{\epsilon })\leq 1+\epsilon $\emph{.}

This Theorem implies that Gr\"{o}chenig-Lyubarskii conjecture can only be
true for lattices. Indeed, for every $n$, the $S_{0}$ norm of the Hermite
functions (see \cite{JanssenHermite}) is%
\begin{equation*}
\left\Vert h_{n}\right\Vert _{S_{0}}=2^{\frac{n}{2}+1}\frac{\Gamma (\frac{1}{%
2}n+1)}{\sqrt{n!}},
\end{equation*}%
and therefore,
\begin{equation*}
h_{n}\in S_{0}.
\end{equation*}%
Thus, the Balan-Casazza-Landau Theorem tells us that we can always find, for
every $n$, a set $J$ with Beurling density arbitrarily close to one and such
that $G(h_{n},\Lambda )$ is a frame. However, it is still possible that the
conjecture is true for lattices, since a subset of a lattice may not be a
lattice.

\begin{conjecture}
Let $\Lambda $ be a lattice in $%
\mathbb{R}
^{2}$. If $G(h_{n},\Lambda )$ is s Gabor frame for $L^{2}\left(
\mathbb{R}
^{d}\right) $, then%
\begin{equation*}
D(\Lambda )>n.
\end{equation*}
\end{conjecture}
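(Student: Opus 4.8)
The plan is to recast the statement as a sharp sampling density bound for a \emph{single} true polyanalytic Fock space and then to attack it with tools that are sensitive to the lattice structure. First I would apply the (true) polyanalytic Bargmann transform together with the identification of $\mathcal{F}^{n}(\mathbb{C})$ with the range of the Gabor transform with window $h_{n}$, so that the hypothesis reads: $G(h_{n},\Lambda)$ is a frame for $L^{2}(\mathbb{R})$ if and only if the complexified lattice $\Gamma$ is a sampling sequence for $\mathcal{F}^{n}(\mathbb{C})$. The desired conclusion $D(\Lambda)>n$ is then exactly a Nyquist lower bound for sampling in the top component of $\mathbf{F}^{n+1}=\mathcal{F}^{0}\oplus\cdots\oplus\mathcal{F}^{n}$. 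Applied to $\mathbf{F}^{n+1}$, Theorem 4 yields only the \emph{sufficiency} $D(\Lambda)>n+1$ (by restricting its sampling inequality to the single summand $\mathcal{F}^{n}$), leaving the unit gap $n<D(\Lambda)\le n+1$ precisely as the content of the conjecture.

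Next I would pass to the adjoint lattice by the Janssen-Ron-Shen duality used for the main theorems: $G(h_{n},\Lambda)$ is a frame if and only if $G(h_{n},\Lambda^{\circ})$ is a Riesz sequence, where $D(\Lambda^{\circ})=1/D(\Lambda)$, so the claim becomes an \emph{upper} density bound $D(\Lambda^{\circ})<1/n$ for a single-window Gabor Riesz sequence, equivalently for interpolation in $\mathcal{F}^{n}(\mathbb{C})$. The mechanism I would try to exploit is the one behind Theorem 4: the order-$n$ polyanalyticity should translate interpolation in $\mathcal{F}^{n}$ into an interpolation problem in the analytic Fock space $\mathcal{F}^{0}(\mathbb{C})$ in which each node carries a weight governed by the polyanalytic order, so that a Brekke-Seip type count would force $n\,D(\Lambda^{\circ})<1$. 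To make this sensitive to lattices I would realize the Riesz-sequence property through the Zak transform: for a lattice of rational density the condition is the uniform invertibility, over a fundamental domain, of the finite Zibulski-Zeitouni matrix built from the Zak transform of $h_{n}$, whose degenerations are governed by the degree-$n$ Laguerre factor appearing in the reproducing kernel of $\mathcal{F}^{n}$. The goal is to show that this factor forces a rank drop, and hence the loss of the lower Riesz bound, as soon as $D(\Lambda)\le n$.

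The hard part will be this last step, for two independent reasons. First, the reduction to a clean order-$n$ jet problem is transparent only for the \emph{full} space $\mathbf{F}^{n+1}$, which is what Theorems 4 and 6 exploit; the single true component $\mathcal{F}^{n}$ does not split off a full jet, so one must control precisely how interpolation in $\mathcal{F}^{n}$ interacts with the lower levels—that is, justify that the single component carries the full weight $n$ rather than a smaller one—and it is exactly here that the absence of any doubly orthogonal system, noted above as the reason Beurling's method fails, removes the eigenfunction machinery that pins the critical density. I therefore expect only the weak bound $D(\Lambda)\ge n$ to be accessible by soft means (a Balan-type vector density estimate combined with a Ramanathan-Steger argument), with the \emph{strict} inequality requiring a genuine zero count at the critical density. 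Second, the Zak-transform reduction is finite-dimensional only for rational density; irrational $D(\Lambda)$ turns the Zibulski-Zeitouni matrix infinite and calls either for an approximation argument or for a direct count of the zeros of the lattice generating function of $\mathcal{F}^{n}$, where the polyanalytic analogue of the Weierstrass sigma function of Gr\"{o}chenig and Lyubarskii is far less rigid. Since a subset of a lattice need not be a lattice, the Balan-Casazza-Landau theorem shows that no argument valid for general sets can succeed, so every step must use lattice rigidity in an essential way; closing the residual unit gap is where I expect the real difficulty to lie, and it is the reason the statement is offered only as a conjecture.
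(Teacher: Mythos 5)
There is no proof in the paper to compare against: the statement you were given is Conjecture~1, which the paper explicitly leaves open. What the paper does supply is (i) the sufficiency direction, namely that $D(\Lambda)>n+1$ makes $G(h_{n},\Lambda)$ a frame (this is the content of Proposition~5 together with the identity $\langle f,M_{\omega}T_{x}h_{n}\rangle=e^{i\pi x\omega-\frac{\pi}{2}|z|^{2}}\mathcal{B}^{n}f(z)$, which identifies the frame property with sampling in the single true polyanalytic component $\mathcal{F}^{n}(\mathbb{C})$), and (ii) the observation that the Balan--Casazza--Landau theorem kills any non-lattice version, so that only a lattice-rigid argument can possibly work. Your proposal reproduces exactly this framing --- the reduction to sampling in $\mathcal{F}^{n}(\mathbb{C})$, the unit gap left by Theorem~4 applied to $\mathbf{F}^{n+1}$, the passage to the adjoint lattice, and the role of lattice rigidity --- and you are candid that you cannot close the gap. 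As a diagnosis of why the statement is a conjecture rather than a theorem, it is accurate and consistent with the paper; as a proof it is, by your own admission, not one.

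One concrete assertion in your plan is wrong and worth flagging. You write that ``the weak bound $D(\Lambda)\ge n$'' should be ``accessible by soft means (a Balan-type vector density estimate combined with a Ramanathan--Steger argument).'' Balan's bound $D(\Lambda)\ge n$ applies to the \emph{vector-valued} system $G(\mathbf{h}_{n},\Lambda)$ with $n$ components, because the redundancy there is counted against an $n$-dimensional fiber; for the \emph{scalar} system $G(h_{n},\Lambda)$ a Ramanathan--Steger comparison argument only yields $D(\Lambda)\ge 1$. Indeed, Ramanathan--Steger arguments are valid for arbitrary discrete sets, and you yourself correctly note two sentences later that no general-set argument can give anything above density $1$ here, precisely because of Balan--Casazza--Landau. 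So even the non-strict inequality $D(\Lambda)\ge n$ for a single Hermite window is not ``soft''; it is essentially the whole difficulty of the conjecture, not a preliminary step. Apart from this internal inconsistency, your identification of where the real obstruction lies (no doubly orthogonal system in $\mathcal{F}^{n}$, no rigid sigma-function analogue, the failure of the jet reduction for a single true component) matches the reasons the paper itself gives for introducing new tools and for leaving this statement open.
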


\subsection{New concepts}

We will introduce some new concepts troughout this paper, in order to extend
Bargmann%
\'{}%
s theory \cite{Bar} and its connection to Gabor analysis in the polyanalytic
setting.

\begin{itemize}
\item We first introduce what we call the \emph{true-polyanalytic Bargmann
transform}:
\begin{equation*}
(\mathcal{B}^{n}f)(z)=(\pi ^{\left\vert n\right\vert }n!)^{-\frac{1}{2}%
}e^{\pi \left\vert z\right\vert ^{2}}\frac{d^{n}}{dz^{n}}\left[ e^{-\pi
\left\vert z\right\vert ^{2}}F(z)\right] \text{.}
\end{equation*}%
Here $F$ stands for the Bargmann transform of $f$. As we will see, this is a
unitary mapping from $L^{2}(%
\mathbb{R}
^{d})$ to $\mathcal{F}^{n}(%
\mathbb{C}
^{d})$. This mapping relates to Gabor transforms with Hermite windows $\Phi
_{n}$ in the following way:%
\begin{equation*}
V_{\Phi _{n}}f(x,\omega )=e^{i\pi x\omega -\pi \frac{\left\vert z\right\vert
^{2}}{2}}(\mathcal{B}^{n}f)(z)\text{,}
\end{equation*}%
and we will provide its basic theory starting from\ this relation, following
as much as possible the presentation of the Bargmann transform given in Gr%
\"{o}chenig%
\'{}%
s book \cite[section 3.3]{Charly}.

\item For vector-valued functions $\mathbf{f}=(f_{0},...,f_{n-1})$, we
define the \emph{polyanalytic Bargmann} \emph{transform},
\begin{equation*}
(\mathbf{B}^{n}\mathbf{f})=\sum_{0\leq k\leq n-1}(\mathcal{B}^{k}f_{k}),
\end{equation*}%
which will be unitary between $L^{2}(%
\mathbb{R}
^{d},%
\mathbb{C}
^{n})$ and $\mathbf{F}^{n}(%
\mathbb{C}
^{d})$.

\item In the last section we will see that the polyanalytic Bargmann
transform is a special case of a vector-valued version of the Gabor
transform. Although this transform plays no direct role in the proofs of the
main Theorems, it must be in the picture for completeness, since it is the
natural time-frequency transformation of which the polyanalytic Bargmann
transform is a special case:
\begin{equation*}
\mathbf{V}_{\mathbf{g}}\mathbf{f}(x,\omega
)=\sum_{k=0}^{n-1}V_{g_{k}}f_{k}(x,\omega ).
\end{equation*}%
In the case where $\{g_{k}\}_{k=0,...n-1}$ constitutes an orthonormal
sequence, it provides an isometry between $L^{2}(%
\mathbb{R}
^{d},%
\mathbb{C}
^{n})$ and between $L^{2}(%
\mathbb{R}
^{2d})$. This transform is the continuous counterpart of the \emph{%
superframes} discussed in \cite{DL}, \cite{Balan}, \cite[Theorem 2.7]%
{CharlyYurasuper}.
\end{itemize}

\subsection{\textbf{Technical summary} \textbf{of proofs }}

With the tools described above at hand, our main argument will depend on two
profound results. More specifically, we will combine variations on the
Janssen-Ron-Shen duality principle \cite{RonShen} with the characterization
of multiple sampling and interpolation sequences in the Fock space \cite%
{Brekkeseip}. The duality principles reflect all the rich inner structure of
Gabor frames. The second result uses a deep elaboration on Beurling%
\'{}%
s balayage technique \cite{Beurling} developed by Seip in \cite{Seip2}.

We will proceed as follows.

First, using an orthogonal basis for the polyanalytic Fock spaces, we prove
the unitarity of $\mathcal{B}^{n}$ and $\mathbf{B}^{n}$. Then we study
sampling in $\mathbf{F}^{n}(%
\mathbb{C}
)$. Using the unitary mapping $\mathbf{B}^{n}$ we show that the problem is
equivalent to the study of vector valued frames with Hermite windows, also
known as superframes \cite{Balan},\cite{CharlyYurasuper}. This problem has
been recently studied in \cite{CharlyYurasuper}, but we provide an
alternative proof, which is more natural in the context of sampling and
interpolation: applying a vector valued version of the Janssen-Ron-Shen
duality we translate the statement into a problem concerning unions of Riesz
sequences. After noticing that the latter is equivalent to a multiple
interpolation problem in Fock spaces of analytic functions, we apply the
interpolation result in \cite{Brekkeseip}. We then study interpolation in $%
\mathbf{F}^{n}(%
\mathbb{C}
)$. In order to do this, we "dualize" the arguments that we have used in the
sampling part, once again using the Janssen-Ron-Shen duality, this time
between vector-valued Riesz sequences and multi-frames with Hermite
functions. This translates our interpolation problem into one of multiple
sampling. Noticing that this problem is equivalent to multiple sampling in
Fock spaces, we apply the sampling result from \cite{Brekkeseip}.

\subsection{\textbf{Organization of the paper}}

The next section contains the classical tools that we are going to use. We
list the basic properties of the Gabor transform, the Bargmann transform and
the Hermite functions.

In the third section, we introduce the true polyanalytic Bargmann and the
polyanalytic Bargmann transforms. By making a connection with the Gabor
transform, we study their basic properties, find an orthogonal basis for the
polyanalytic Fock spaces and prove the unitarity properties.

\ Our main results are in the fourth and fifth sections, where we derive the
duality principles and study sampling and interpolation for $\mathbf{F}^{n}(%
\mathbb{C}
)$.

In our last section we introduce the super Gabor transform and make some
remarks of a more informal character concerning applications and open
problems.

\section{Background}

\subsection{The Gabor transform}

Fix a function $g\neq 0$. Then the Gabor (short-time) Fourier transform of a
function $f$ with respect to the \textquotedblright
window\textquotedblright\ $g$ is defined, for every $x,\omega \in
\mathbb{R}
^{d}$, as
\begin{equation}
V_{g}f(x,\omega )=\int_{%
\mathbb{R}
^{d}}f(t)\overline{g(t-x)}e^{-2\pi it\omega }dt.  \label{Gabor}
\end{equation}%
The following relations are usually called \emph{the orthogonal relations
for the short-time Fourier transform}. Let $f_{1},f_{2},g_{1},g_{2}\in L^{2}(%
\mathbb{R}
^{d})$. Then $V_{g_{1}}f_{1},V_{g_{2}}f_{2}\in L^{2}(%
\mathbb{R}
^{2d})$ and
\begin{equation}
\left\langle V_{g_{1}}f_{1},V_{g_{2}}f_{2}\right\rangle _{L^{2}(%
\mathbb{R}
^{2d})}=\left\langle f_{1},f_{2}\right\rangle _{L^{2}(%
\mathbb{R}
^{d})}\overline{\left\langle g_{1},g_{2}\right\rangle _{L^{2}(%
\mathbb{R}
^{d})}}\text{.}  \label{ortogonalityrelations}
\end{equation}

The Gabor transform provides an isometry
\begin{equation*}
V_{g}:L^{2}(%
\mathbb{R}
^{d})\rightarrow L^{2}(%
\mathbb{R}
^{2d})\text{,}
\end{equation*}%
that is, if $f,g\in L^{2}(%
\mathbb{R}
^{d})$, then
\begin{equation}
\left\Vert V_{g}f\right\Vert _{L^{2}(%
\mathbb{R}
^{2d})}=\left\Vert f\right\Vert _{L^{2}(%
\mathbb{R}
^{d})}\left\Vert g\right\Vert _{L^{2}(%
\mathbb{R}
^{d})}\text{.}  \label{Gabor isometry}
\end{equation}%
For every $x,\omega \in
\mathbb{R}
^{d}$ define the operators translation by $x$ and modulation by $\omega $ as
\begin{eqnarray*}
T_{x}f(t) &=&f(t-x), \\
M_{\omega }f(t) &=&e^{2\pi i\omega t}f(t).
\end{eqnarray*}%
Using these operators we can write (\ref{Gabor}) as%
\begin{equation*}
V_{g}f(x,\omega )=\left\langle f,M_{\omega }T_{x}g\right\rangle _{L^{2}(%
\mathbb{R}
^{d})}\text{.}
\end{equation*}

\subsection{The Bargmann transform}

Here we will use multi-index notation: $z=(z_{1},...z_{d})$, $%
n=(n_{1},...,n_{d})$ and $\left\vert n\right\vert =n_{1}+...+n_{d}$. The
Bargmann transform, defined by
\begin{equation*}
(\mathcal{B}f)(z)=\int_{%
\mathbb{R}
^{d}}f(t)e^{2\pi tz-\pi z^{2}-\frac{\pi }{2}t^{2}}dt\text{,}
\end{equation*}%
is an isomorphism
\begin{equation*}
\mathcal{B}:L^{2}(%
\mathbb{R}
^{d})\rightarrow \mathcal{F}(%
\mathbb{C}
^{d}),
\end{equation*}%
where $\mathcal{F}(%
\mathbb{C}
^{d})$ stands for the Bargmann-Fock space of analytic functions in $%
\mathbb{C}
^{d}$ with the norm
\begin{equation}
\left\Vert F\right\Vert _{\mathcal{F}(%
\mathbb{C}
^{d})}^{2}=\int_{%
\mathbb{C}
^{d}}\left\vert F(z)\right\vert ^{2}e^{-\pi \left\vert z\right\vert ^{2}}dz.
\label{norm}
\end{equation}%
The collection of the monomials of the form
\begin{equation}
e_{n}(z)=\left( \frac{\pi ^{\left\vert n\right\vert }}{n!}\right) ^{\frac{1}{%
2}}z^{n}=\prod_{j=1}^{d}\frac{\pi ^{\frac{n_{j}}{2}}}{\sqrt{n_{j}!}}z^{n_{j}}%
\text{,}  \label{ort}
\end{equation}%
where $n=(n_{1,...}n_{d})$, with $n_{i}\geq 0$, constitutes an orthonormal
basis of $\mathcal{F}(%
\mathbb{C}
^{d})$. The reproducing kernel of $\mathcal{F}(%
\mathbb{C}
^{d})$ is the function $e^{\pi \overline{z}w}$. This means that, for every $%
F\in \mathcal{F}(%
\mathbb{C}
^{d})$,%
\begin{equation*}
\left\langle F(w),e^{\pi \overline{z}w}\right\rangle _{\mathcal{F}(%
\mathbb{C}
^{d})}=F(z)\text{.}
\end{equation*}%
Differentiating $n-k$ times the corresponding reproducing equation, we obtain%
\begin{equation}
\left\langle F(w),w^{n-k}e^{\pi \overline{z}w}\right\rangle _{\mathcal{F}(%
\mathbb{C}
^{d})}=\pi ^{k-n}F^{(n-k)}(z),  \label{repder}
\end{equation}%
where for $d>1$ we are using the multi-index derivative%
\begin{equation*}
\frac{d}{dz}f=\frac{df}{dz_{1}...dz_{d}}.
\end{equation*}

A simple calculation shows that the Bargmann transform is related to the
Gabor transform with the Gaussian window $\varphi (t)=2^{\frac{d}{4}}e^{-\pi
t^{2}}$ by the formula
\begin{equation}
V_{\varphi }f(x,-\omega )=e^{i\pi x\omega -\pi \frac{\left\vert z\right\vert
^{2}}{2}}(\mathcal{B}f)(z)\text{,}  \label{formulaBar}
\end{equation}%
where $z=x+i\omega $.

We will need one more operator. Define a "translation" $\beta _{\zeta }$ on $%
\mathcal{F}(%
\mathbb{C}
^{d})$ by
\begin{equation}
\beta _{z}F(\zeta )=e^{i\pi x\omega -\pi \frac{\left\vert z\right\vert ^{2}}{%
2}}e^{\pi \overline{z}\zeta }F(\zeta -z)\text{. }  \label{shift}
\end{equation}%
The operator $\beta _{z}$ satisfies the intertwining property
\begin{equation}
\beta _{z}\mathcal{B}=\mathcal{B}M_{\omega }T_{x}\text{, \ \ \ \ }%
z=x+i\omega .  \label{intertwining}
\end{equation}

\subsection{The Hermite functions}

The \emph{Hermite functions }can be defined via the so called Rodrigues
Formula
\begin{equation*}
h_{n}(t)=c_{n}e^{\pi t^{2}}\left( \frac{d}{dt}\right) ^{n}\left( e^{-2\pi
t^{2}}\right) .
\end{equation*}%
where $c_{n}$ is chosen in such a way that they can provide an orthonormal
basis of $L^{2}(%
\mathbb{R}
)$. Now let $n=(n_{1},...,n_{d})$ and $x\in
\mathbb{R}
^{d}$. The $d$\emph{-dimensional Hermite functions} are%
\begin{equation*}
\Phi _{n}(x)=\prod_{j=1}^{d}h_{n_{j}}(x_{j})\text{.}
\end{equation*}%
They form a complete orthonormal system of $L^{2}(%
\mathbb{R}
^{d})$.

A very important property of the Hermite functions (see for instance \cite%
{JanssenHermite}) is that they are mapped onto a basis of the Bargmann-Fock
space via the Bargmann transform.
\begin{equation}
(\mathcal{B}\Phi _{n})(z)=e_{n}(z).  \label{BargHermite}
\end{equation}

\section{Polyanalytic Fock spaces and polyanalytic Bargmann transforms}

\subsection{Definitions}

In this section we use multi-index notation in such a way that there will be
no difference between the one and the $d$-dimensional case. Only at the end
of the last two sections is it necessary to specialize $d=1$.

It is well known \cite{Balk} that every polyanalytic function of order $n$
can be uniquely expressed in the form
\begin{equation}
F(z)=\sum_{0\leq k\leq n-1}\overline{z}^{k}\varphi _{k}(z),
\label{polyexpression}
\end{equation}%
where $\{\varphi _{p}(z)\}_{p=0}^{n-1}$ are analytic functions, each of them
with a power series expansion,%
\begin{equation*}
\varphi _{p}(z)=\sum_{j\geq 0}c_{j,p}z^{j}\text{,}
\end{equation*}%
As a result, there is also a power series expansion for the polyanalytic
function $F$:
\begin{equation}
F(z)=\sum_{0\leq p\leq n-1}\overline{z}^{p}\sum_{j\geq 0}c_{j,p}z^{j}\text{.}
\label{seriespoly}
\end{equation}%
We will often use the inner product in the polyanalytic Fock space, given by%
\begin{equation*}
\left\langle F,G\right\rangle _{\mathbf{F}^{n}(%
\mathbb{C}
^{d})}=\int_{%
\mathbb{C}
^{d}}F(z)\overline{G(z)}e^{-\pi \left\vert z\right\vert ^{2}}dz.
\end{equation*}%
Observe also that this implies%
\begin{equation*}
\left\langle F,G\right\rangle _{\mathbf{F}^{n}(%
\mathbb{C}
^{d})}=\left\langle e^{-\pi \frac{\left\vert z\right\vert ^{2}}{2}}F,e^{-\pi
\frac{\left\vert z\right\vert ^{2}}{2}}G\right\rangle _{L^{2}(%
\mathbb{R}
^{2d})}\text{.}
\end{equation*}

\subsection{The true polyanalytic Bargmann transform}

\begin{definition}
The \emph{true polyanalytic Bargmann transform of order n}, of a function on
$%
\mathbb{R}
^{d}$, is defined by the formula
\begin{equation}
(\mathcal{B}^{n}f)(z)=(\pi ^{\left\vert n\right\vert }n!)^{-\frac{1}{2}%
}e^{\pi \left\vert z\right\vert ^{2}}\frac{d^{n}}{dz^{n}}\left[ e^{-\pi
\left\vert z\right\vert ^{2}}F(z)\right] ,  \label{polyBargman}
\end{equation}%
where $F(z)=(\mathcal{B}f)(z)$.
\end{definition}

Clearly $\mathcal{B}^{0}f=\mathcal{B}f$ and $\mathcal{B}^{n}$ is a
generalization of the Bargmann transform. We now provide the fundamental
properties of $\mathcal{B}^{n}$. We try to stay as close as possible to the
presentation of section 3.4 in \cite{Charly}. The next proposition is the
departing point of our study.

\begin{proposition}
If $f$ is a function on $%
\mathbb{R}
^{d}$ with polynomial growth, then its true polyanalytic Bargmann transform $%
\mathcal{B}^{n}f$ is a polyanalytic function of order $n+1$ on $%
\mathbb{C}
^{d}.$ If we write $z=x+i\omega $, then this transform is related to the
Gabor transform with Hermite windows in the following way:
\begin{equation}
V_{\Phi _{n}}f(x,\omega )=e^{i\pi x\omega -\pi \frac{\left\vert z\right\vert
^{2}}{2}}(\mathcal{B}^{n}f)(z)\text{.}  \label{rel}
\end{equation}%
Moreover, if $f\in L^{2}(%
\mathbb{R}
)$, then
\begin{equation}
\left\Vert \mathcal{B}^{n}f\right\Vert _{L^{2}(%
\mathbb{C}
^{d},e^{-\pi \left\vert z\right\vert ^{2}})}=\left\Vert f\right\Vert _{L^{2}(%
\mathbb{R}
^{d})}.  \label{polybfisometry}
\end{equation}
\end{proposition}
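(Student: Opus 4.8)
The three assertions are naturally established in the order (\ref{defpolyanalytic})-type polyanaliticity, then the Gabor identity (\ref{rel}), then the isometry (\ref{polybfisometry}); the middle one is the crux, and the last follows from it in a single line. The common computational core is a Leibniz expansion of the defining formula (\ref{polyBargman}).

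First I would record the pointwise form of $\mathcal{B}^{n}f$. Writing $F=\mathcal{B}f$, which is entire since the polynomial growth of $f$ guarantees convergence of the Bargmann integral, and using that $\frac{d}{dz}$ treats $\overline{z}$ as a constant, Leibniz' rule gives
\[
\frac{d^{n}}{dz^{n}}\left[ e^{-\pi \left\vert z\right\vert ^{2}}F(z)\right] =e^{-\pi \left\vert z\right\vert ^{2}}\sum_{k=0}^{n}\binom{n}{k}(-\pi \overline{z})^{k}F^{(n-k)}(z),
\]
the $d$-variable case holding coordinatewise. Hence
\[
(\mathcal{B}^{n}f)(z)=(\pi ^{\left\vert n\right\vert }n!)^{-\frac{1}{2}}\sum_{k=0}^{n}\binom{n}{k}(-\pi \overline{z})^{k}F^{(n-k)}(z),
\]
a polynomial of degree $n$ in $\overline{z}$ with analytic coefficients; applying $(\frac{d}{d\overline{z}})^{n+1}$ annihilates it, which is exactly polyanaliticity of order $n+1$.

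For the Gabor identity I would exploit the covariance of the Bargmann transform rather than compute $V_{\Phi _{n}}f$ directly. Using the unitarity of $\mathcal{B}$, the relation $\mathcal{B}\Phi _{n}=e_{n}$ from (\ref{BargHermite}), and the intertwining identity (\ref{intertwining}), one writes
\[
V_{\Phi _{n}}f(x,\omega )=\left\langle f,M_{\omega }T_{x}\Phi _{n}\right\rangle =\left\langle F,\beta _{z}e_{n}\right\rangle _{\mathcal{F}}.
\]
Substituting the definition (\ref{shift}) of $\beta _{z}$ and expanding $(\zeta -z)^{n}$ by the binomial theorem reduces the inner product to a sum of terms $\left\langle F,\zeta ^{n-k}e^{\pi \overline{z}\zeta }\right\rangle $, each of which is evaluated by the differentiated reproducing-kernel formula (\ref{repder}) as $\pi ^{k-n}F^{(n-k)}(z)$. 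Collecting the terms reproduces exactly the polynomial displayed above, so that $V_{\Phi _{n}}f(x,\omega )$ equals $(\mathcal{B}^{n}f)(z)$ up to the weight $e^{i\pi x\omega -\pi \left\vert z\right\vert ^{2}/2}$ of (\ref{rel}). The one delicate point, and the main (though routine) obstacle, is the bookkeeping of the normalizing constants $(\pi ^{\left\vert n\right\vert }n!)^{\pm 1/2}$ together with the $\pi ^{k-n}$ factors, and reading off the phase sign consistently with the convention fixed for the Gaussian window in (\ref{formulaBar}).

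The isometry (\ref{polybfisometry}) is then immediate. From the Gabor identity, $\left\vert (\mathcal{B}^{n}f)(z)\right\vert ^{2}e^{-\pi \left\vert z\right\vert ^{2}}=\left\vert V_{\Phi _{n}}f(x,\omega )\right\vert ^{2}$, so integrating over $\mathbb{C}^{d}\cong \mathbb{R}^{2d}$ yields $\left\Vert \mathcal{B}^{n}f\right\Vert _{L^{2}(\mathbb{C}^{d},e^{-\pi \left\vert z\right\vert ^{2}})}^{2}=\left\Vert V_{\Phi _{n}}f\right\Vert _{L^{2}(\mathbb{R}^{2d})}^{2}$. The isometry of the Gabor transform (\ref{Gabor isometry}) then gives $\left\Vert V_{\Phi _{n}}f\right\Vert _{L^{2}(\mathbb{R}^{2d})}=\left\Vert f\right\Vert _{L^{2}(\mathbb{R}^{d})}\left\Vert \Phi _{n}\right\Vert _{L^{2}(\mathbb{R}^{d})}=\left\Vert f\right\Vert _{L^{2}(\mathbb{R}^{d})}$, since the Hermite functions form an orthonormal system and hence $\left\Vert \Phi _{n}\right\Vert =1$.
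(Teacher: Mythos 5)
Your proposal is correct and follows essentially the same route as the paper's proof: both compute $V_{\Phi_n}f=\langle F,\beta_z\mathcal{B}\Phi_n\rangle$ via the intertwining property and the differentiated reproducing formula (\ref{repder}), identify the resulting binomial sum $\sum_k\binom{n}{k}(-\pi\overline z)^kF^{(n-k)}(z)$ with the Leibniz expansion of the defining derivative formula (\ref{polyBargman}), read off polyanaliticity of order $n+1$ from the representation (\ref{polyexpression}), and deduce the isometry from (\ref{rel}) together with (\ref{Gabor isometry}). The only difference is cosmetic ordering (you expand $\mathcal{B}^nf$ first and match afterwards), and your explicit flag about the normalization and phase-sign bookkeeping is apt, since the paper itself is a little loose on that point.
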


\begin{proof}
Let $F=\mathcal{B}f$. The following calculation is from Proposition 3.2 in
\cite{CharlyYura}, where (\ref{repder}) is used:
\begin{eqnarray*}
V_{\Phi _{n}}f(x,\omega ) &=&\left\langle f,M_{x}T_{\omega }\Phi
_{n}\right\rangle _{L^{2}(%
\mathbb{R}
^{d})} \\
&=&\left\langle F,\beta _{z}\mathcal{B}\Phi _{n}\right\rangle _{\mathcal{F}(%
\mathbb{C}
^{d})} \\
&=&(\pi ^{\left\vert n\right\vert }n!)^{-\frac{1}{2}}e^{i\pi x\omega -\frac{%
\pi }{2}\left\vert z\right\vert ^{2}}\left\langle F(w),e^{\pi \overline{z}%
w}(w-z)^{n}\right\rangle _{\mathcal{F}(%
\mathbb{C}
^{d})} \\
&=&(\pi ^{\left\vert n\right\vert }n!)^{-\frac{1}{2}}e^{i\pi x\omega -\frac{%
\pi }{2}\left\vert z\right\vert ^{2}}\sum_{0\leq k\leq n}\binom{n}{k}(-\pi
\overline{z})^{k}F^{(n-k)}(z)\text{.}
\end{eqnarray*}%
Now, since the Bargmann transform of a function in $%
\mathbb{R}
^{d}$ is an entire function \cite[Proposition 3.4.1]{Charly}, the functions $%
F^{(n-k)}(z)$ are also entire, and from (\ref{polyexpression}) we recognize
the sum as a polyanalytic function of order $n+1$. To prove (\ref{rel})
observe that the last expression can be written as
\begin{equation*}
e^{i\pi x\omega -\pi \frac{\left\vert z\right\vert ^{2}}{2}}(\pi
^{\left\vert n\right\vert }n!)^{-\frac{1}{2}}e^{\pi \left\vert z\right\vert
^{2}}\frac{d^{n}}{dz^{n}}\left[ e^{-\pi \left\vert z\right\vert ^{2}}F(z)%
\right] =e^{i\pi x\omega -\pi \frac{\left\vert z\right\vert ^{2}}{2}}(%
\mathcal{B}^{n}f)(z)\text{.}
\end{equation*}%
The isometric property (\ref{polybfisometry}) is an immediate consequence of
(\ref{rel}) and (\ref{Gabor isometry}).
\end{proof}

\subsection{Orthogonal decomposition}

\begin{definition}
For $k,m\in
\mathbb{N}
_{0}^{d}$, consider the functions $e_{k,m}$ defined as
\begin{equation}
e_{k,m}(z)=(\pi ^{\left\vert k\right\vert }k!)^{-\frac{1}{2}}e^{\pi
\left\vert z\right\vert ^{2}}\left( \frac{d}{dz}\right) ^{k}\left[ e^{-\pi
\left\vert z\right\vert ^{2}}e_{m}(z)\right] \text{.}  \label{basis}
\end{equation}
\end{definition}

From (\ref{BargHermite}) one can easily see that
\begin{equation}
e_{k,m}(z)=(\mathcal{B}^{k}\Phi _{m})(z)\text{.}  \label{Barbasis}
\end{equation}

\begin{proposition}
The set $\left\{ e_{k,m}\right\} _{0\leq k\leq n-1;\text{...}m\geq 0}$is an
orthogonal basis of $\mathbf{F}^{n}(%
\mathbb{C}
^{d})$.
\end{proposition}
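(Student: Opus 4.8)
The plan is to establish three things: that each $e_{k,m}$ lies in $\mathbf{F}^{n}(\mathbb{C}^{d})$ for $0\leq k\leq n-1$, that the system is orthogonal, and that it is complete. The orthogonality and norm computations should follow almost for free from the machinery already assembled, while completeness will be the step requiring genuine care.

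First I would observe that by (\ref{Barbasis}) we have $e_{k,m}=\mathcal{B}^{k}\Phi_{m}$, and by the Proposition on the true polyanalytic Bargmann transform, $\mathcal{B}^{k}\Phi_{m}$ is polyanalytic of order $k+1\leq n$; together with the isometry (\ref{polybfisometry}), which gives $\|e_{k,m}\|_{\mathbf{F}^{n}(\mathbb{C}^{d})}=\|\Phi_{m}\|_{L^{2}(\mathbb{R}^{d})}=1$, this shows the $e_{k,m}$ are unit vectors sitting inside $\mathbf{F}^{n}(\mathbb{C}^{d})$. So membership and normalization are immediate.

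For orthogonality I would pass through the identity (\ref{rel}), which rewrites the map $f\mapsto e^{-\pi|z|^{2}/2}(\mathcal{B}^{n}f)$ (up to the phase $e^{i\pi x\omega}$) as the Gabor transform $V_{\Phi_{n}}f$. Using the inner-product identity $\langle F,G\rangle_{\mathbf{F}^{n}}=\langle e^{-\pi|z|^{2}/2}F,e^{-\pi|z|^{2}/2}G\rangle_{L^{2}(\mathbb{R}^{2d})}$ recorded just above, and applying it to $F=e_{k,m}=\mathcal{B}^{k}\Phi_{m}$ and $G=e_{k',m'}=\mathcal{B}^{k'}\Phi_{m'}$, the phase factors cancel and the polyanalytic inner product becomes
\begin{equation*}
\langle e_{k,m},e_{k',m'}\rangle_{\mathbf{F}^{n}(\mathbb{C}^{d})}=\langle V_{\Phi_{k}}\Phi_{m},V_{\Phi_{k'}}\Phi_{m'}\rangle_{L^{2}(\mathbb{R}^{2d})}.
\end{equation*}
Now the orthogonality relations (\ref{ortogonalityrelations}) for the short-time Fourier transform factor the right-hand side as $\langle\Phi_{m},\Phi_{m'}\rangle_{L^{2}}\,\overline{\langle\Phi_{k},\Phi_{k'}\rangle_{L^{2}}}$, which is $\delta_{k,k'}\delta_{m,m'}$ since the Hermite functions $\Phi_{j}$ are orthonormal. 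This simultaneously reconfirms the unit norm and gives orthogonality across both indices, so the whole system is orthonormal.

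The main obstacle is completeness. I would argue it via the orthogonal decomposition suggested in the introduction: the true polyanalytic spaces $\mathcal{F}^{k}(\mathbb{C}^{d})$ are the ranges of the unitary maps $\mathcal{B}^{k}$, so $\{e_{k,m}\}_{m\geq 0}=\{\mathcal{B}^{k}\Phi_{m}\}_{m\geq 0}$ is an orthonormal basis of $\mathcal{F}^{k}(\mathbb{C}^{d})$ because $\{\Phi_{m}\}$ is an orthonormal basis of $L^{2}(\mathbb{R}^{d})$ and $\mathcal{B}^{k}$ is an isometry onto $\mathcal{F}^{k}$. The cross-orthogonality $\langle\mathcal{B}^{k}f,\mathcal{B}^{k'}g\rangle=0$ for $k\neq k'$, which the computation above already yields for Hermite inputs and which extends to all of $L^{2}$ by density, shows the ranges are mutually orthogonal, giving $\mathcal{F}^{0}\oplus\cdots\oplus\mathcal{F}^{n-1}\subseteq\mathbf{F}^{n}$. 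To finish I must show this sum exhausts $\mathbf{F}^{n}(\mathbb{C}^{d})$: given $F\in\mathbf{F}^{n}$, write it in the canonical form (\ref{polyexpression}) as $F=\sum_{k=0}^{n-1}\overline{z}^{k}\varphi_{k}$ with $\varphi_{k}$ analytic, and check that each term, after the triangular change of basis relating $\{\overline{z}^{k}z^{j}\}$ to the $e_{k,m}$, lies in the span of the $e_{k,m}$; the finiteness of the norm (\ref{growth}) guarantees the coefficients are square-summable. This dimension-count/change-of-basis step, establishing that the uniquely determined analytic components $\varphi_{k}$ are precisely captured by the span, is the delicate part, and it is cleanest to phrase it as the statement that $\mathbf{F}^{n}(\mathbb{C}^{d})=\bigoplus_{k=0}^{n-1}\mathcal{F}^{k}(\mathbb{C}^{d})$, which is exactly the Corollary 1 announced in the introduction.
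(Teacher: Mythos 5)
Your orthogonality argument is exactly the paper's: identify $e_{k,m}=\mathcal{B}^{k}\Phi_{m}$ via (\ref{Barbasis}), pass to Gabor transforms through (\ref{rel}), and apply the orthogonality relations (\ref{ortogonalityrelations}) together with the orthonormality of the Hermite functions to get $\delta_{k,k'}\delta_{m,m'}$. That part is correct and complete.

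The completeness argument, however, has a genuine gap, and you have in effect flagged it yourself without closing it. You reduce completeness to the decomposition $\mathbf{F}^{n}(\mathbb{C}^{d})=\mathcal{F}^{0}(\mathbb{C}^{d})\oplus\cdots\oplus\mathcal{F}^{n-1}(\mathbb{C}^{d})$, but in the paper that decomposition is Corollary 1, which is \emph{deduced from} this Proposition: the spaces $\mathcal{F}^{k}(\mathbb{C}^{d})$ are \emph{defined} in (\ref{true}) as the closed spans of $\{e_{k,m}\}_{m\geq 0}$, so the statement that their sum exhausts $\mathbf{F}^{n}(\mathbb{C}^{d})$ is literally the completeness claim you are trying to prove. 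Appealing to it is circular. The step you describe as "the delicate part" --- that the canonical components $\overline{z}^{k}\varphi_{k}$ of an arbitrary $F\in\mathbf{F}^{n}(\mathbb{C}^{d})$ are captured by the span of the $e_{k,m}$ via a triangular change of basis --- is precisely the content that must be supplied, and your sketch does not supply it; in particular a formal triangular relation between the monomials $\overline{z}^{p}z^{j}$ and the $e_{k,m}$ does not by itself put $F$ in the \emph{closed} span without controlling convergence. The paper closes this gap by the dual route: assume $F\in\mathbf{F}^{n}(\mathbb{C}^{d})$ is orthogonal to every $e_{k,m}$, expand $F$ in the power series (\ref{seriespoly}), compute $\left\langle F,e_{k,m}\right\rangle$ explicitly (by integration by parts for $k\geq 1$), and observe that for each fixed $m$ the resulting equations in the coefficients $c_{m+p-k,p}$ form a triangular system whose only solution is zero, whence $F=0$. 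Since for an orthogonal system completeness is equivalent to the vanishing of every vector orthogonal to it, this finishes the proof without ever needing to exhibit the expansion of $F$ itself. Your triangularity intuition is the right one, but it must be executed in this (or an equivalent) concrete form rather than delegated to a corollary that depends on the result.
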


\begin{proof}
The orthogonality follows from (\ref{Barbasis}), (\ref{rel}) and (\ref%
{ortogonalityrelations}), since
\begin{eqnarray*}
\left\langle e_{k,m},e_{l,j}\right\rangle _{L^{2}(%
\mathbb{R}
^{2d})} &=&\left\langle \mathcal{B}^{k}\Phi _{m},\mathcal{B}^{l}\Phi
_{j}\right\rangle _{\mathcal{F}(%
\mathbb{C}
^{d})} \\
&=&\left\langle e^{\pi \frac{\left\vert z\right\vert ^{2}}{2}-i\pi x\omega
}V_{\Phi _{k}}\Phi _{m},e^{\pi \frac{\left\vert z\right\vert ^{2}}{2}-i\pi
x\omega }V_{\Phi _{l}}\Phi _{j}\right\rangle _{\mathcal{F}(%
\mathbb{C}
^{d})} \\
&=&\left\langle V_{\Phi _{k}}\Phi _{m},V_{\Phi _{l}}\Phi _{j}\right\rangle
_{L^{2}(%
\mathbb{R}
^{2d})} \\
&=&\left\langle \Phi _{m},\Phi _{j}\right\rangle _{L^{2}(%
\mathbb{R}
^{d})}\overline{\left\langle \Phi _{k},\Phi _{l}\right\rangle _{L^{2}(%
\mathbb{R}
^{d})}} \\
&=&\delta _{m,j}\delta _{k,l}.
\end{eqnarray*}%
To prove completeness of $\{e_{k,m}\}$ in $\mathbf{F}^{n}(%
\mathbb{C}
^{d})$, suppose that $F\in \mathbf{F}^{n}(%
\mathbb{C}
^{d})$ is such that
\begin{equation*}
\left\langle F,e_{k,m}\right\rangle _{\mathcal{F}(%
\mathbb{C}
^{d})}=0\text{, \ \ \ \ \ }0\leq k\leq n-1;\text{ \ \ \ \ }m\geq 0\text{.}
\end{equation*}%
For $k=0$, we can use the representation of $F$ in power series (\ref%
{seriespoly}). Interchanging the sums with the integrals and using the
orthogonality of the functions (\ref{ort}), the result is
\begin{equation}
\left\langle F,e_{0,m}\right\rangle _{\mathcal{F}(%
\mathbb{C}
^{d})}=\sum_{0\leq p\leq n-1}c_{p+m,p}\frac{(p+m)!}{\sqrt{m!}\pi
^{\left\vert 2p+m\right\vert }}=0\text{, \ \ \ }m\geq 0.  \label{kzero}
\end{equation}%
For $k\geq 1$, a calculation using integration by parts gives:
\begin{eqnarray*}
\left\langle F,e_{k,m}\right\rangle _{\mathcal{F}(%
\mathbb{C}
^{d})} &=&\int_{%
\mathbb{C}
^{d}}F(z)\overline{e_{k,m}(z)}e^{-\pi \left\vert z\right\vert ^{2}}dz \\
&=&\int_{%
\mathbb{C}
^{d}}e^{-\pi \left\vert z\right\vert ^{2}}\overline{e_{m}(z)}%
p...(p-k+1)\sum_{k\leq p\leq n-1}\overline{z}^{p-k}\sum_{j\geq
0}c_{j,p}z^{j}dz \\
&=&\sum_{k\leq p\leq n-1}\sum_{j\geq 0}c_{j,p}\frac{p...(p-k+1)\pi
^{\left\vert m\right\vert }}{\sqrt{m!}}\int_{%
\mathbb{C}
^{d}}z^{j}\overline{z}^{m+p-k}e^{-\pi \left\vert z\right\vert ^{2}}dz.
\end{eqnarray*}%
As a result,
\begin{equation*}
\sum_{k\leq p\leq n-1}\frac{p...(p-k+1)(p+m-k)!}{\pi ^{\left\vert
m+2p-2k\right\vert }\sqrt{m!}}c_{m+p-k,p}=0\text{, \ \ }m\geq 0\text{, }%
0\leq k\leq n-1\text{,}
\end{equation*}%
resulting in a triangular system for each $m$. Solving this system we obtain
$c_{j,p}=0$ for $k\leq p\leq n-1$ and $j\geq 0$. Therefore, $F=0$.
\end{proof}

\begin{remark}
Clearly the orthogonality in Proposition 2 can be obtained directly by
integration by parts and moving to polar coordinates. For $k=0$ this has the
advantage of showing that the functions are also orthogonal in the polydisk,
providing the useful "double orthogonality" property as in the proof of \cite%
[Theorem 3.4.2.]{Charly}. However, for $k\geq 0$, the boundary behavior
required in the integration by parts eliminates this advantage, making the
functions $e_{k,m}$ less likely to possess such a property.
\end{remark}

\begin{remark}
It is clear that these functions are reminiscent of the so-called special
Hermite functions, which are the Wigner transforms of two Hermite functions
\cite{Thang}. They also appear in the study of Landau levels in \cite%
{LuefGosson}.
\end{remark}

\begin{definition}
The \emph{true} polyanalytic Fock space of order\emph{\ }$n$ is defined as
\begin{equation}
\mathcal{F}^{n}(%
\mathbb{C}
^{d})=Span\left[ \left\{ e_{n,m}(z)\right\} _{m\geq 0.}\right] \text{.}
\label{true}
\end{equation}
\end{definition}

\begin{remark}
Observe that%
\begin{equation*}
\left( \frac{d}{dz}\right) ^{n}\left[ e^{-\pi \left\vert z\right\vert
^{2}}z^{m}\right] =\frac{d^{m+n}}{dz^{n}d\overline{z}^{m}}\left[ e^{-\pi
\left\vert z\right\vert ^{2}}\right] .
\end{equation*}%
Therefore, our functions $e_{n,m}$ are essentially the complex Hermitian
functions introduced in \cite[pag. 126]{Shigekawa} and, as a result,
according to Theorem 7.1 in \cite{Shigekawa}, the true polyanalytic Fock
spaces are the eigenspaces of the Schr\"{o}dinger operator with magnetic
field in $%
\mathbb{R}
^{2}$, associated with the eigenvalue $n+\frac{1}{2}$. Also, observe that
the basis used in \cite{Ramazanov} approaches this one by a formal limit
procedure.
\end{remark}

The orthogonal basis property has the following consequence:

\begin{corollary}
The polyanalytic Fock space, $\mathbf{F}^{n}(%
\mathbb{C}
^{d})$, admits the following decomposition in terms of true polyanalytic
Fock spaces $\mathcal{F}^{k}(%
\mathbb{C}
^{d})$.
\begin{equation}
\mathbf{F}^{n}(%
\mathbb{C}
^{d})=\mathcal{F}^{0}(%
\mathbb{C}
^{d})\oplus ...\oplus \mathcal{F}^{n-1}(%
\mathbb{C}
^{d})\text{.}  \label{decomposition}
\end{equation}
\end{corollary}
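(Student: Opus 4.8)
The plan is to read off the decomposition directly from the orthogonal basis constructed in Proposition 2, so that no further analytic work is needed. First I would recall that Proposition 2 asserts that the family $\{e_{k,m}\}_{0\leq k\leq n-1,\, m\geq 0}$ is an orthogonal basis of $\mathbf{F}^{n}(\mathbb{C}^{d})$, and that the orthogonality computation carried out there in fact yields the sharper relation $\left\langle e_{k,m},e_{l,j}\right\rangle =\delta_{m,j}\delta_{k,l}$. The index set of this basis partitions naturally according to the first index $k$, and by the definition (\ref{true}) of the true polyanalytic Fock space the subfamily $\{e_{k,m}\}_{m\geq 0}$ obtained by fixing $k$ spans precisely $\mathcal{F}^{k}(\mathbb{C}^{d})$.

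The key step is then to observe that this partition of the basis induces the asserted orthogonal direct sum. On the one hand, the relation $\left\langle e_{k,m},e_{l,j}\right\rangle =0$ whenever $k\neq l$ shows that the closed subspaces $\mathcal{F}^{0}(\mathbb{C}^{d}),\ldots,\mathcal{F}^{n-1}(\mathbb{C}^{d})$ are mutually orthogonal. On the other hand, since $\{e_{k,m}\}$ is a complete system in $\mathbf{F}^{n}(\mathbb{C}^{d})$, the closed linear span of the union $\bigcup_{k=0}^{n-1}\{e_{k,m}\}_{m\geq 0}$ is all of $\mathbf{F}^{n}(\mathbb{C}^{d})$. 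Combining these two facts gives the orthogonal decomposition (\ref{decomposition}).

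I do not expect any genuine obstacle here: the corollary is a formal consequence of Proposition 2 together with the definition of $\mathcal{F}^{k}(\mathbb{C}^{d})$. The only point that deserves care is to make explicit the standard Hilbert-space fact that a complete orthogonal system whose index set is partitioned into blocks yields the orthogonal direct sum of the closed spans of the blocks; once completeness and block-orthogonality are in hand, every $F\in\mathbf{F}^{n}(\mathbb{C}^{d})$ admits a unique expansion $F=\sum_{k=0}^{n-1}F_{k}$ with $F_{k}\in\mathcal{F}^{k}(\mathbb{C}^{d})$ the orthogonal projection onto the $k$-th block, which is exactly the content of (\ref{decomposition}).
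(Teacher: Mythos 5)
Your argument is correct and is precisely the paper's (implicit) proof: the author states the corollary as an immediate consequence of the orthogonal basis property from Proposition 2 together with the definition (\ref{true}) of $\mathcal{F}^{k}(\mathbb{C}^{d})$ as the span of the block $\{e_{k,m}\}_{m\geq 0}$. Your explicit spelling-out of the block-orthogonality and completeness steps is exactly what the paper leaves to the reader.
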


This results in a definition equivalent to the one in \cite{VasiBergman},
where the spaces were defined using the decomposition. Observe that $\mathbf{%
F}^{1}(%
\mathbb{C}
^{d})=\mathcal{F}^{0}(%
\mathbb{C}
^{d})=\mathcal{F}(%
\mathbb{C}
^{d})$ and that functions in $\mathcal{F}^{n}(%
\mathbb{C}
^{d})$ are polyanalytic of order $n+1$.

\subsection{Unitarity of $\mathcal{B}^{n}$}

The true polyanalytic Bargmann transform keeps the unitarity property

\begin{theorem}
The true polyanalytic Bargmann transform is an isometric isomorphism
\begin{equation*}
\mathcal{B}^{n}:L^{2}(%
\mathbb{R}
^{d})\rightarrow \mathcal{F}^{n}(%
\mathbb{C}
^{d})\text{.}
\end{equation*}
\end{theorem}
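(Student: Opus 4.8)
The plan is to deduce the theorem almost immediately from two facts already assembled: the isometry relation (\ref{polybfisometry}) and the identification (\ref{Barbasis}) of the basis vectors $e_{n,m}$ as images of Hermite functions under $\mathcal{B}^n$. First I would observe that (\ref{polybfisometry}) already gives $\left\Vert \mathcal{B}^n f\right\Vert_{L^2(\mathbb{C}^d, e^{-\pi|z|^2})}=\left\Vert f\right\Vert_{L^2(\mathbb{R}^d)}$ for every $f\in L^2(\mathbb{R}^d)$, so $\mathcal{B}^n$ is a linear isometry; in particular it is injective and bounded, and by polarization it preserves inner products. Thus the entire content of the theorem that is not yet in hand is the surjectivity statement, namely that the range of $\mathcal{B}^n$ is exactly $\mathcal{F}^n(\mathbb{C}^d)$.

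For surjectivity I would use that $\{\Phi_m\}_{m\geq 0}$ is a complete orthonormal system in $L^2(\mathbb{R}^d)$. Applying $\mathcal{B}^n$ and invoking (\ref{Barbasis}), the images are $\mathcal{B}^n\Phi_m=e_{n,m}$, and the computation carried out in the proof of Proposition 2 gives $\left\langle e_{n,m},e_{n,j}\right\rangle=\delta_{m,j}$, so $\{e_{n,m}\}_{m\geq 0}$ is an orthonormal system. Since an isometry of a Hilbert space has closed range and carries a complete orthonormal basis onto a complete orthonormal basis of that range, the range of $\mathcal{B}^n$ is the closed linear span of $\{e_{n,m}\}_{m\geq 0}$. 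By Definition (\ref{true}) this closed span is precisely $\mathcal{F}^n(\mathbb{C}^d)$, whence $\mathcal{B}^n$ maps $L^2(\mathbb{R}^d)$ onto $\mathcal{F}^n(\mathbb{C}^d)$ isometrically, that is, it is an isometric isomorphism.

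The step requiring the most care is the closed-range/surjectivity argument: what must be verified is that no element of $\mathcal{F}^n(\mathbb{C}^d)$ escapes the range, which is exactly the statement that the image under $\mathcal{B}^n$ of a total set in $L^2(\mathbb{R}^d)$ is total in the range. This holds because $\mathcal{B}^n$ is an isometry (so its range is complete, hence closed) and expansion coefficients are preserved. One subtlety worth flagging is that (\ref{true}) defines $\mathcal{F}^n(\mathbb{C}^d)$ as the closed span of the $e_{n,m}$, so the identification of the range with $\mathcal{F}^n(\mathbb{C}^d)$ is simply a matter of matching this definition against the closed span produced by the isometry. Once this is observed, no further complex-analytic input is needed, since all the analytic content has already been absorbed into relation (\ref{rel}) and Proposition 2.
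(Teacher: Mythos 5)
Your proposal is correct and follows essentially the same route as the paper: the isometry is taken directly from (\ref{polybfisometry}), and surjectivity is obtained by noting that $\mathcal{B}^{n}$ carries the Hermite basis of $L^{2}(\mathbb{R}^{d})$ onto the orthogonal basis $\{e_{n,m}\}_{m\geq 0}$ of $\mathcal{F}^{n}(\mathbb{C}^{d})$ via (\ref{Barbasis}), so the range is dense and, being closed (as the range of an isometry), equals the whole space. Your explicit remark about the closed-range step merely spells out what the paper leaves implicit.
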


\begin{proof}
Since we know from (\ref{polybfisometry}) that $\mathcal{B}^{n}$ is
isometric, we only need to show that $\mathcal{B}^{n}[L^{2}(%
\mathbb{R}
^{d})]$ is dense in $\mathcal{F}^{n}(%
\mathbb{C}
^{d})$. This is now easy, since the Hermite functions constitute a basis of $%
L^{2}(%
\mathbb{R}
^{d})$ and, by (\ref{Barbasis}), they are mapped onto the basis $\left\{
e_{n,m}(z)\right\} $ of $\mathcal{F}^{n}(%
\mathbb{C}
)$. Since $\mathcal{B}^{n}[L^{2}(%
\mathbb{R}
^{d})]$ contains a basis of $\ \mathcal{F}^{n}(%
\mathbb{C}
^{d})$, then it must be dense.
\end{proof}

\subsection{The polyanalytic Bargmann transform}

Now consider the Hilbert space $\mathcal{H}=L^{2}(%
\mathbb{R}
^{d},%
\mathbb{C}
^{n})$ consisting of vector-valued functions $\mathbf{f}=(f_{0},...,f_{n-1})$
with the inner product

\begin{equation}
\left\langle \mathbf{f,g}\right\rangle _{\mathcal{H}}=\sum_{0\leq k\leq
n-1}\left\langle f_{k},g_{k}\right\rangle _{L^{2}(%
\mathbb{R}
^{d})}\text{.}  \label{innersuper}
\end{equation}

The \emph{polyanalytic Bargmann transform }of a function $\mathbf{f}%
=(f_{0},...,f_{n-1})$ is defined as
\begin{equation}
(\mathbf{B}^{n}\mathbf{f})(z)=\sum_{0\leq k\leq n-1}(\mathcal{B}^{k}f_{k})(z)%
\text{.}  \label{polybargmann}
\end{equation}

The next Theorem, which may have independent interest as a generalization of
Bargmann%
\'{}%
s unitary transform, will be the cornerstone in the proof of our main
results regarding sampling and interpolation.

\begin{theorem}
The polyanalytic Bargmann transform is an isometric isomorphism
\begin{equation*}
\mathbf{B}^{n}:\mathcal{H}\rightarrow \mathbf{F}^{n}(%
\mathbb{C}
^{d})\text{.}
\end{equation*}
\end{theorem}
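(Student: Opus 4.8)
The plan is to deduce everything from two facts already in hand: the orthogonal decomposition $\mathbf{F}^{n}(\mathbb{C}^{d})=\mathcal{F}^{0}(\mathbb{C}^{d})\oplus\cdots\oplus\mathcal{F}^{n-1}(\mathbb{C}^{d})$ recorded in (\ref{decomposition}) (Corollary 1), and the fact, just established, that each true polyanalytic Bargmann transform $\mathcal{B}^{k}$ is an isometric isomorphism of $L^{2}(\mathbb{R}^{d})$ onto the summand $\mathcal{F}^{k}(\mathbb{C}^{d})$. The point is that the defining formula (\ref{polybargmann}) exhibits $\mathbf{B}^{n}$ as the assembly of the scalar transforms $\mathcal{B}^{k}$ along this decomposition: the $k$-th term $\mathcal{B}^{k}f_{k}$ of $\mathbf{B}^{n}\mathbf{f}$ lands precisely in the $k$-th orthogonal summand $\mathcal{F}^{k}(\mathbb{C}^{d})$.

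First I would verify the isometry. Since $\mathcal{B}^{k}f_{k}\in\mathcal{F}^{k}(\mathbb{C}^{d})$ and these subspaces are mutually orthogonal inside $\mathbf{F}^{n}(\mathbb{C}^{d})$, all cross terms in the expansion of the squared norm vanish, and I obtain
\begin{equation*}
\left\Vert\mathbf{B}^{n}\mathbf{f}\right\Vert_{\mathbf{F}^{n}(\mathbb{C}^{d})}^{2}=\sum_{0\leq k\leq n-1}\left\Vert\mathcal{B}^{k}f_{k}\right\Vert_{\mathcal{F}^{k}(\mathbb{C}^{d})}^{2}=\sum_{0\leq k\leq n-1}\left\Vert f_{k}\right\Vert_{L^{2}(\mathbb{R}^{d})}^{2}=\left\Vert\mathbf{f}\right\Vert_{\mathcal{H}}^{2},
\end{equation*}
where the middle equality is the isometric property (\ref{polybfisometry}) applied to each order $k$, and the last is the definition (\ref{innersuper}) of the norm on $\mathcal{H}$. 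In particular this shows $\mathbf{B}^{n}$ is well defined into $\mathbf{F}^{n}(\mathbb{C}^{d})$ and injective.

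It then remains to establish surjectivity. Given $F\in\mathbf{F}^{n}(\mathbb{C}^{d})$, the orthogonal decomposition (\ref{decomposition}) lets me write uniquely $F=\sum_{0\leq k\leq n-1}F_{k}$ with $F_{k}\in\mathcal{F}^{k}(\mathbb{C}^{d})$. Since each $\mathcal{B}^{k}$ maps $L^{2}(\mathbb{R}^{d})$ \emph{onto} $\mathcal{F}^{k}(\mathbb{C}^{d})$, I may set $f_{k}=(\mathcal{B}^{k})^{-1}F_{k}$; the vector $\mathbf{f}=(f_{0},\dots,f_{n-1})$ then satisfies $\mathbf{B}^{n}\mathbf{f}=F$. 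Hence $\mathbf{B}^{n}$ is an isometric isomorphism.

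I do not expect a genuine obstacle here: the substantive content has been front-loaded into the unitarity of the scalar transforms $\mathcal{B}^{k}$ and into the orthogonality of the true polyanalytic Fock spaces. The only point deserving care is checking that the ranges match exactly — that the image of $\mathcal{B}^{k}$ is precisely the summand $\mathcal{F}^{k}(\mathbb{C}^{d})$ occurring in (\ref{decomposition}), with distinct summands genuinely orthogonal and spanning all of $\mathbf{F}^{n}(\mathbb{C}^{d})$. This is exactly what the preceding unitarity theorem and Corollary 1 supply, so the argument closes at once.
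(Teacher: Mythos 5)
Your proposal is correct and follows essentially the same route as the paper: the isometry via vanishing cross terms plus the scalar isometry (\ref{polybfisometry}), and surjectivity by decomposing $F$ along (\ref{decomposition}) and inverting each $\mathcal{B}^{k}$. The only cosmetic difference is that you justify the orthogonality of the cross terms by the mutual orthogonality of the summands $\mathcal{F}^{k}(\mathbb{C}^{d})$, whereas the paper invokes the Gabor orthogonality relations (\ref{ortogonalityrelations}) together with (\ref{rel}) directly; these amount to the same thing.
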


\begin{proof}
For the isometry, first observe that, from (\ref{ortogonalityrelations}) and
(\ref{rel}),
\begin{equation*}
\left\langle \mathcal{B}^{k}f_{k},\mathcal{B}^{j}f_{j}\right\rangle _{%
\mathcal{F}^{n}(%
\mathbb{C}
^{d})}=\delta _{k,j}.
\end{equation*}%
Then, using the isometric property of $\mathcal{B}^{n}$,
\begin{eqnarray*}
\left\Vert \mathbf{B}^{n}\mathbf{f}\right\Vert _{\mathbf{F}^{n}(%
\mathbb{C}
^{d})}^{2} &=&\sum_{0\leq k\leq n-1}\left\Vert \mathcal{B}%
^{k}f_{k}\right\Vert _{\mathcal{F}^{n}(%
\mathbb{C}
^{d})}^{2} \\
&=&\sum_{0\leq k\leq n-1}\left\Vert f_{k}\right\Vert _{L^{2}(%
\mathbb{R}
^{d})}^{2}=\left\Vert f\right\Vert _{\mathcal{H}}^{2}\text{.}
\end{eqnarray*}%
Moreover, $\mathbf{B}^{n}[L^{2}(%
\mathbb{R}
^{d})]$ is dense in $\mathbf{F}^{n}(%
\mathbb{C}
^{d})$, since, by the decomposition (\ref{decomposition}), every element $%
F\in \mathbf{F}^{n}(%
\mathbb{C}
^{d})$ can be written as
\begin{equation*}
F=\sum_{0\leq k\leq n-1}F_{k}\text{,}
\end{equation*}%
with $F_{k}\in \mathcal{F}^{k}(%
\mathbb{C}
^{d})$, $0\leq k\leq n-1$. Since $\mathcal{B}^{k}$ is unitary, there exists $%
f_{k}\in L^{2}(%
\mathbb{R}
^{d})$ such that $F_{k}=\mathcal{B}^{k}f_{k}$, for every $0\leq k\leq n-1$.
It follows that $F=\mathbf{B}^{n}\mathbf{f}$, with $\mathbf{f=}%
(f_{0},...,f_{n-1})$.
\end{proof}

\section{Sampling in $\mathbf{F}^{n}(%
\mathbb{C}
)$}

\subsection{Definitions}

We will work with lattices. A lattice is a discrete subgroup in $%
\mathbb{R}
^{2d}$ of the form $\Lambda =A%
\mathbb{Z}
^{2d}$, where $A$ is an invertible $2d\times 2d$ matrix. We will define the
\emph{density} of the lattice by%
\begin{equation}
D(\Lambda )=\frac{1}{\left\vert \det A\right\vert }\text{.}
\label{densitylattice}
\end{equation}%
If we write $z=x+i\omega $ and $\pi _{z}g=M_{\omega }T_{x}g$, the adjoint
lattice $\Lambda ^{0}$ is defined by the commuting property as%
\begin{equation*}
\Lambda ^{0}=\{\mu \in
\mathbb{R}
^{2d}:\pi _{z}\pi _{\mu }=\pi _{\mu }\pi _{z}\text{, for all }z\in \Lambda \}%
\text{.}
\end{equation*}%
If $\Lambda =\alpha
\mathbb{Z}
\times \beta
\mathbb{Z}
$, then $\Lambda ^{0}=\beta ^{-1}%
\mathbb{Z}
\times \alpha ^{-1}%
\mathbb{Z}
$. In general,
\begin{equation*}
\Lambda ^{0}=\mathcal{J}(A^{T})^{-1}%
\mathbb{Z}
^{d}\text{,}
\end{equation*}%
where $A^{T}$ is the transpose of $A$ and $\mathcal{J=}\left[
\begin{array}{cc}
0 & I \\
-I & 0%
\end{array}%
\right] $ (consisting of $d\times d$ blocks) is the matrix defining the
standard sympletic form (see \cite{CharlyYurasuper} and \cite{FK}).
Therefore,
\begin{equation}
D(\Lambda ^{0})=\frac{1}{D(\Lambda )}\text{.}  \label{reldensities}
\end{equation}%
We will use the notation $\Gamma =\{z=x+i\omega \}$ to indicate the complex
sequence associated with the sequence $\Lambda =(x,\omega )$. The density of
$\Gamma $ will be the density of the associated lattice, that is $D(\Gamma
)=D(\Lambda )$.

\begin{definition}
$\Gamma $ is a sampling sequence for the space $\mathbf{F}^{n}(%
\mathbb{C}
^{d})$\ if there exist positive constants $A$ and $B$ such that, for every $%
F\in \mathbf{F}^{n}(%
\mathbb{C}
^{d})$,
\begin{equation}
A\left\Vert F\right\Vert _{\mathbf{F}^{n}(%
\mathbb{C}
^{d})}^{2}\leq \sum_{z\in \Gamma }\left\vert F(z)\right\vert ^{2}e^{-\pi
\left\vert z\right\vert ^{2}}\leq B\left\Vert F\right\Vert _{\mathbf{F}^{n}(%
\mathbb{C}
^{d})}^{2}.  \label{sampling}
\end{equation}
\end{definition}

The definition of sampling in the spaces $\mathcal{F}^{k}(%
\mathbb{C}
^{d})$ is exactly the same.

Now we take the following definition, obtained from \cite[page 114]%
{Brekkeseip}, by making a small simplification (in the notation of \cite[%
page 114]{Brekkeseip} we set $\nu (z)=n$ ) and rewriting it in our context
(observe that the weight $e^{i\pi x\omega }$ makes no diference).

\begin{definition}
A sequence $\Gamma _{n}$, consisting of $n$ copies of $\Gamma $ is a
multiple interpolating sequence in the Fock space $\mathcal{F}(%
\mathbb{C}
^{d})$ if, for every sequence $\{\alpha _{i,j}^{(k)}\}_{k=0,...n-1}$ such
that $\{\alpha _{i,j}^{(k)}\}_{k=0,...n-1}\in l^{2}$, there exists $F\in
\mathcal{F}(%
\mathbb{C}
^{d})$ such that $\left\langle F,\beta _{z}e_{k}\right\rangle =\alpha
_{i,j}^{(k)},$ for all $0\leq k\leq n-1$ and every $z\in \Gamma $.
\end{definition}

Consider again the Hilbert space $\mathcal{H}=L^{2}(%
\mathbb{R}
^{d},%
\mathbb{C}
^{n})$ consisting of vector-valued functions $\mathbf{f}=(f_{0},...,f_{n-1})$
with the inner product (\ref{innersuper}). The time-frequency shifts act
coordinate-wise in an obvious way.

\begin{definition}
The vector valued system $\mathcal{G}(\mathbf{g},\Lambda )=\{M_{\omega }T_{x}%
\mathbf{g}\}_{(x,w)\in \Lambda }$ is a $\emph{Ga}$\emph{b}$\emph{or}$ \emph{%
superframe} for $\mathcal{H}$ if there exist constants $A$ and $B$ such
that, for every $\mathbf{f}\in \mathcal{H}$,
\begin{equation}
A\left\Vert \mathbf{f}\right\Vert _{\mathcal{H}}^{2}\leq \sum_{(x,w)\in
\Lambda }\left\vert \left\langle \mathbf{f},M_{\omega }T_{x}\mathbf{g}%
\right\rangle _{\mathcal{H}}\right\vert ^{2}\leq B\left\Vert \mathbf{f}%
\right\Vert _{\mathcal{H}}^{2}.  \label{superframe}
\end{equation}
\end{definition}

Superframes were introduced in a more abstract form in \cite{DL} and in the
context of "multiplexing" in \cite{Balan}. We will need a fundamental
structure Theorem from time-frequency analysis, namely the following version
of the Janssen-Ron-Shen duality principle \cite[Theorem 2.7]{CharlyYurasuper}%
.%
\begin{equation*}
\end{equation*}

\textbf{Theorem A. }Let $\mathbf{g}=(g_{0},...,g_{n-1})$.\ The vector valued
system $\mathcal{G}(\mathbf{g},\Lambda )$ is a $\emph{Ga}$\emph{b}$\emph{or}$
\emph{superframe} for $\mathcal{H}$ if and only if the union of Gabor
systems $\cup _{k=0}^{n-1}\mathcal{G}(g_{k},\Lambda ^{0})$ is a Riesz
sequence for $L^{2}(%
\mathbb{R}
^{d})$.

\subsection{Duality principle}

In this section we will obtain the following duality principle.

\begin{theorem}
$\Gamma $ is a sampling sequence for $\mathbf{F}^{n}(%
\mathbb{C}
^{d})$ if and only if the adjoint sequence $\Gamma _{n}^{0}$ is a multiple
interpolating sequence in the Fock space $\mathcal{F}(%
\mathbb{C}
^{d})$.
\end{theorem}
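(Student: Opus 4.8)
The plan is to pass through the polyanalytic Bargmann transform $\mathbf{B}^{n}$ and Theorem A, converting the sampling inequality first into a superframe condition and then, via the ordinary Bargmann transform, into the multiple interpolation condition. First I would rewrite the sampling sum as a super Gabor transform. Given $F\in\mathbf{F}^{n}(\mathbb{C}^{d})$, Theorem 2 furnishes a unique $\mathbf{f}=(f_{0},\dots,f_{n-1})\in\mathcal{H}$ with $F=\mathbf{B}^{n}\mathbf{f}=\sum_{k}\mathcal{B}^{k}f_{k}$ and $\left\Vert F\right\Vert_{\mathbf{F}^{n}}=\left\Vert\mathbf{f}\right\Vert_{\mathcal{H}}$. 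Multiplying by $e^{-\pi\left\vert z\right\vert^{2}/2}$ and applying the relation (\ref{rel}) coordinatewise gives $e^{-\pi\left\vert z\right\vert^{2}/2}F(z)=e^{-i\pi x\omega}\sum_{k}V_{\Phi_{k}}f_{k}(x,\omega)=e^{-i\pi x\omega}\langle\mathbf{f},M_{\omega}T_{x}\mathbf{h}_{n}\rangle_{\mathcal{H}}$, where $\mathbf{h}_{n}=(\Phi_{0},\dots,\Phi_{n-1})$ and the last equality uses the inner product (\ref{innersuper}). Since the unimodular factor disappears under $\left\vert\cdot\right\vert^{2}$, the sampling sum equals $\sum_{(x,\omega)\in\Lambda}\left\vert\langle\mathbf{f},M_{\omega}T_{x}\mathbf{h}_{n}\rangle_{\mathcal{H}}\right\vert^{2}$. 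Comparing (\ref{sampling}) with (\ref{superframe}) and using the isometry $\left\Vert F\right\Vert=\left\Vert\mathbf{f}\right\Vert$ shows that $\Gamma$ is a sampling sequence for $\mathbf{F}^{n}(\mathbb{C}^{d})$ if and only if $\mathcal{G}(\mathbf{h}_{n},\Lambda)$ is a Gabor superframe for $\mathcal{H}$.

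Next I would apply Theorem A, which asserts that $\mathcal{G}(\mathbf{h}_{n},\Lambda)$ is a superframe precisely when the union $\cup_{k=0}^{n-1}\mathcal{G}(\Phi_{k},\Lambda^{0})$ is a Riesz sequence in $L^{2}(\mathbb{R}^{d})$. It then remains to identify this Riesz sequence condition with multiple interpolation in $\mathcal{F}(\mathbb{C}^{d})$, and here I would transport everything through the Bargmann transform $\mathcal{B}$. For $F=\mathcal{B}f$, unitarity of $\mathcal{B}$ together with the intertwining relation (\ref{intertwining}) and $\mathcal{B}\Phi_{k}=e_{k}$ from (\ref{BargHermite}) yields $\langle f,M_{\omega'}T_{x'}\Phi_{k}\rangle_{L^{2}}=\langle F,\mathcal{B}M_{\omega'}T_{x'}\Phi_{k}\rangle_{\mathcal{F}}=\langle F,\beta_{\mu}e_{k}\rangle_{\mathcal{F}}$, where $\mu=x'+i\omega'\in\Gamma^{0}$. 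Thus the analysis operator of $\cup_{k}\mathcal{G}(\Phi_{k},\Lambda^{0})$ is carried by $\mathcal{B}$ to the operator $F\mapsto\{\langle F,\beta_{\mu}e_{k}\rangle\}_{k,\mu}$ on $\mathcal{F}(\mathbb{C}^{d})$, whose index set is exactly $\Gamma_{n}^{0}$, the $n$-fold copy of $\Gamma^{0}$.

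The crux, and the step I expect to be the main obstacle, is the abstract duality between Riesz sequences and interpolation. A system is a Riesz sequence exactly when its analysis operator is bounded and surjective onto $l^{2}$; the surjectivity is precisely the solvability of the moment problem $\langle F,\beta_{\mu}e_{k}\rangle=\alpha_{i,j}^{(k)}$ for every datum in $l^{2}$, which is the defining property of $\Gamma_{n}^{0}$ being a multiple interpolating sequence in $\mathcal{F}(\mathbb{C}^{d})$. Care is needed to match the two Riesz bounds with the stability implicit in the Brekke--Seip definition: the upper (Bessel) bound corresponds to the boundedness of the data map, while the lower bound is the surjectivity that produces the interpolating $F$. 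Once this equivalence is stated cleanly, chaining the three equivalences, namely sampling in $\mathbf{F}^{n}(\mathbb{C}^{d})$ $\Leftrightarrow$ superframe for $\mathcal{H}$ $\Leftrightarrow$ Riesz sequence $\cup_{k}\mathcal{G}(\Phi_{k},\Lambda^{0})$ $\Leftrightarrow$ multiple interpolation of $\Gamma_{n}^{0}$ in $\mathcal{F}(\mathbb{C}^{d})$, delivers the theorem.
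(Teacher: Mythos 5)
Your proposal is correct and follows essentially the same route as the paper: your first paragraph is exactly the paper's Lemma 2 (sampling in $\mathbf{F}^{n}(\mathbb{C}^{d})$ $\Leftrightarrow$ $\mathcal{G}(\mathbf{h}_{n},\Lambda)$ is a superframe, via the unitarity of $\mathbf{B}^{n}$ and (\ref{rel})), the appeal to Theorem A is identical, and your translation of the Riesz-sequence condition on $\cup_{k}\mathcal{G}(h_{k},\Lambda^{0})$ into the moment problem $\left\langle F,\beta_{\mu}e_{k}\right\rangle =\alpha_{i,j}^{(k)}$ is the paper's Lemma 1. The Bessel-bound caveat you flag is the one point the paper defers, disposing of it in a later remark by noting that the Hermite functions lie in $S_{0}$.
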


We first prove two Lemmas. Combining them with Theorem A gives Theorem 3.

\begin{lemma}
The union of Gabor systems $\cup _{k=0}^{n-1}\mathcal{G}(h_{k},\Lambda )$ is
a Riesz sequence for $L^{2}(%
\mathbb{R}
^{d})$ if and only if $\Gamma _{n}$ is a multiple interpolating sequence in
the Fock space $\mathcal{F}(%
\mathbb{C}
^{d})$.
\end{lemma}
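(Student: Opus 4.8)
The plan here is to translate both statements into the language of the Bargmann--Fock space via the transforms established in the previous sections, and to recognize that they are saying the same thing once unwound. The key observation is relation~(\ref{rel}), which says that the Gabor transform $V_{\Phi_k}f$ with the $k$th Hermite window equals, up to the unimodular-times-Gaussian weight $e^{i\pi x\omega - \pi|z|^2/2}$, the true-polyanalytic Bargmann transform $(\mathcal{B}^k f)(z)$. Since $\mathcal{B}^k$ is a unitary isomorphism onto $\mathcal{F}^k(\mathbb{C}^d)$ by Theorem~1, frame/Riesz-sequence properties of Gabor systems transfer exactly to sampling/interpolation properties of the image spaces, with the weight $e^{-\pi|z|^2}$ on the Fock side accounting for the Gaussian factor.

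First I would spell out the Riesz-sequence condition for $\cup_{k=0}^{n-1}\mathcal{G}(h_k,\Lambda)$ explicitly: it asserts that the analysis map $\mathbf{f}\mapsto \{\langle f, M_\omega T_x h_k\rangle\}_{k,(x,\omega)\in\Lambda}$ is bounded below and above, or equivalently (by the standard duality between Riesz sequences and the surjectivity of the adjoint synthesis map onto $\ell^2$) that the associated moment problem is solvable for every $\ell^2$ data. Concretely, $\cup_{k=0}^{n-1}\mathcal{G}(h_k,\Lambda)$ is a Riesz sequence precisely when, for every $\{\alpha_{i,j}^{(k)}\}\in\ell^2$, one can find $f\in L^2(\mathbb{R}^d)$ with $\langle f, M_\omega T_x h_k\rangle = \alpha_{i,j}^{(k)}$ for all $k$ and all $z\in\Gamma$, together with the two-sided stability estimate. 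I would then apply~(\ref{rel}) coordinatewise to rewrite each inner product $\langle f, M_\omega T_x h_k\rangle = \overline{V_{\Phi_k}f(x,\omega)}$ (up to conjugation conventions) as $e^{-i\pi x\omega - \pi|z|^2/2}\overline{(\mathcal{B}^k f)(z)}$, and recognize via the intertwining property~(\ref{intertwining}) and the shift $\beta_z$ that this is exactly $\langle F, \beta_z e_k\rangle$ with $F = \mathcal{B}f\in\mathcal{F}(\mathbb{C}^d)$.

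This is the heart of the argument: the moment conditions $\langle f, M_\omega T_x h_k\rangle = \alpha_{i,j}^{(k)}$ become, under the Bargmann transform $\mathcal{B}$, precisely the interpolation conditions $\langle F, \beta_z e_k\rangle = \alpha_{i,j}^{(k)}$ appearing in Definition~7 of a multiple interpolating sequence $\Gamma_n$ for $\mathcal{F}(\mathbb{C}^d)$. Since $\mathcal{B}$ is a unitary isomorphism from $L^2(\mathbb{R}^d)$ onto $\mathcal{F}(\mathbb{C}^d)$, the existence of $f$ realizing arbitrary $\ell^2$ data is equivalent to the existence of the corresponding $F\in\mathcal{F}(\mathbb{C}^d)$, and the Riesz lower/upper bounds correspond exactly to the stability of the interpolation problem. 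The weight $e^{i\pi x\omega}$ and the Gaussian play no role beyond bookkeeping, as already noted in the remark preceding Definition~7.

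The main obstacle I anticipate is purely a matter of carefully matching conventions: getting the complex conjugations, the sign of $\omega$ in $V_\varphi f(x,-\omega)$ from~(\ref{formulaBar}), and the index correspondence $(x,\omega)\leftrightarrow z=x+i\omega$ straight so that the $\beta_z e_k$ in Definition~7 genuinely coincides with the image of $M_\omega T_x h_k$ under $\mathcal{B}$. In particular one must verify that $\mathcal{B}(M_\omega T_x h_k) = \beta_z \mathcal{B}\Phi_k = \beta_z e_k$ using~(\ref{intertwining}) and~(\ref{BargHermite}); once this identity is in hand, the equivalence of the two formulations is immediate. No hard analysis is needed here — the depth is entirely absorbed into the unitarity of $\mathcal{B}$ and the previously established properties of the transforms — so the proof should reduce to transporting the definitions through the dictionary and checking the stability constants transfer.
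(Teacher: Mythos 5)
Your proposal is correct and follows essentially the same route as the paper: both arguments phrase the Riesz-sequence property of $\cup_{k=0}^{n-1}\mathcal{G}(h_{k},\Lambda)$ as an $\ell^{2}$ moment problem and then transport it through the unitary Bargmann transform using the identity $\left\langle f,M_{\omega}T_{x}h_{k}\right\rangle =\left\langle \mathcal{B}f,\beta_{z}e_{k}\right\rangle$, obtained from the intertwining property~(\ref{intertwining}) and $\mathcal{B}\Phi_{k}=e_{k}$, so that setting $F=\mathcal{B}f$ yields exactly the multiple-interpolation conditions of Definition~7. Your additional attention to the stability bounds and conjugation conventions is sound bookkeeping that the paper leaves implicit.
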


\begin{proof}
The union of Gabor systems $\cup _{k=0}^{n-1}\mathcal{G}(h_{k},\Lambda )$ is
a Riesz sequence for $L^{2}(%
\mathbb{R}
^{d})$ if, for every sequence $\{\alpha _{i,j}^{(k)}\}_{k=0,...n-1}\in l^{2}$%
, there exists a $f\in L^{2}(%
\mathbb{R}
^{d})$ such that $\left\langle f,M_{\omega }T_{x}h_{k}\right\rangle =\alpha
_{i,j}^{(k)}$, for all $k=0,...n-1$ and every $(x,\omega )\in \Lambda $.
Using the unitarity of $\mathcal{B}$ and the intertwining property (\ref%
{intertwining}) gives
\begin{equation*}
\left\langle f,M_{\omega }T_{x}h_{k}\right\rangle =\left\langle \mathcal{B}%
f,\beta _{z}e_{k}\right\rangle \text{,}
\end{equation*}%
and setting $F=\mathcal{B}f$ shows that $\Gamma _{n}$ is a multiple
interpolating sequence in the Fock space $\mathcal{F}(%
\mathbb{C}
^{d})$.
\end{proof}

The next Lemma is a key step in our argument and it is at this point that
the unitarity of the polyanalytic Bargmann transform is essential.

\begin{lemma}
Let $\mathbf{h}_{n}=(h_{0},...,h_{n-1})$. Then the set $\mathcal{G}(\mathbf{h%
}_{n},\Lambda )$ is a Gabor superframe for $\mathcal{H}=L^{2}(%
\mathbb{R}
^{d},%
\mathbb{C}
^{n})$ if and only if the associated complex sequence $\Gamma $ is a
sampling sequence for $\mathbf{F}^{n}(%
\mathbb{C}
^{d})$.
\end{lemma}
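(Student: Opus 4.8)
The plan is to transport the superframe condition \eqref{superframe} directly onto the sampling condition \eqref{sampling} along the polyanalytic Bargmann transform, using the intertwining relation (\ref{rel}) together with the unitarity established in Theorem 2. Since $\mathbf{B}^{n}$ is an isometric isomorphism, the only substantive work is to rewrite the superframe coefficients $\langle \mathbf{f},M_{\omega }T_{x}\mathbf{h}_{n}\rangle _{\mathcal{H}}$ as weighted point evaluations of $\mathbf{B}^{n}\mathbf{f}$.

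First I would unfold the inner product on $\mathcal{H}$. For $\mathbf{f}=(f_{0},\dots ,f_{n-1})$ and $z=x+i\omega $, the definition (\ref{innersuper}) gives
\[
\langle \mathbf{f},M_{\omega }T_{x}\mathbf{h}_{n}\rangle _{\mathcal{H}}=\sum_{k=0}^{n-1}\langle f_{k},M_{\omega }T_{x}h_{k}\rangle _{L^{2}(\mathbb{R}^{d})}=\sum_{k=0}^{n-1}V_{h_{k}}f_{k}(x,\omega ).
\]
By (\ref{rel}) each summand equals $V_{h_{k}}f_{k}(x,\omega )=e^{i\pi x\omega -\pi |z|^{2}/2}(\mathcal{B}^{k}f_{k})(z)$. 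The crucial observation is that the factor $e^{i\pi x\omega -\pi |z|^{2}/2}$ is independent of the index $k$, so it pulls out of the sum, and by the definition (\ref{polybargmann}) the remaining sum is exactly $(\mathbf{B}^{n}\mathbf{f})(z)$. Hence $\langle \mathbf{f},M_{\omega }T_{x}\mathbf{h}_{n}\rangle _{\mathcal{H}}=e^{i\pi x\omega -\pi |z|^{2}/2}(\mathbf{B}^{n}\mathbf{f})(z)$, and since $|e^{i\pi x\omega }|=1$,
\[
|\langle \mathbf{f},M_{\omega }T_{x}\mathbf{h}_{n}\rangle _{\mathcal{H}}|^{2}=e^{-\pi |z|^{2}}\,|(\mathbf{B}^{n}\mathbf{f})(z)|^{2}.
\]

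With this identity in hand I would set $F=\mathbf{B}^{n}\mathbf{f}$ and sum over $(x,\omega )\in \Lambda $, equivalently over $z\in \Gamma $. The middle term of (\ref{superframe}) becomes $\sum_{z\in \Gamma }|F(z)|^{2}e^{-\pi |z|^{2}}$, which is exactly the middle term of (\ref{sampling}). By Theorem 2 the assignment $\mathbf{f}\mapsto F$ is a norm-preserving bijection of $\mathcal{H}$ onto $\mathbf{F}^{n}(\mathbb{C}^{d})$, so $\|\mathbf{f}\|_{\mathcal{H}}=\|F\|_{\mathbf{F}^{n}(\mathbb{C}^{d})}$ and, as $\mathbf{f}$ ranges over $\mathcal{H}$, $F$ ranges over all of $\mathbf{F}^{n}(\mathbb{C}^{d})$. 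Therefore the superframe bounds hold for every $\mathbf{f}$ with constants $A,B$ if and only if the sampling bounds hold for every $F$ with the same $A,B$, which is the asserted equivalence.

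I do not expect a genuine obstacle here: the mathematical content is entirely absorbed into Theorem 2 and relation (\ref{rel}), and the argument is a clean change of variables along the unitary $\mathbf{B}^{n}$. The one point requiring care is the bookkeeping that the common weight $e^{i\pi x\omega -\pi |z|^{2}/2}$ is truly independent of the summation index $k$, so that the coordinate-wise superframe coefficients collapse onto a single polyanalytic sample of $\mathbf{B}^{n}\mathbf{f}$. Once that is verified, the equivalence of the two pairs of frame inequalities is immediate.
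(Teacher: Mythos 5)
Your proposal is correct and is essentially identical to the paper's own proof: both expand the $\mathcal{H}$-inner product coordinate-wise, apply the relation (\ref{rel}) to each term, pull out the common weight $e^{i\pi x\omega -\frac{\pi}{2}|z|^{2}}$ to recognize $(\mathbf{B}^{n}\mathbf{f})(z)$, and then invoke the unitarity of $\mathbf{B}^{n}$ (Theorem 2) to identify the superframe inequalities (\ref{superframe}) with the sampling inequalities (\ref{sampling}). Your write-up merely spells out the surjectivity and norm-preservation bookkeeping that the paper leaves implicit.
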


\begin{proof}
Using the definition of the inner product (\ref{innersuper}), identity (\ref%
{rel}) and the definition of the polyanalytic Bargmann transform, it is
clear that
\begin{eqnarray}
\left\langle \mathbf{f},M_{\omega }T_{x}\mathbf{h}_{n}\right\rangle _{%
\mathcal{H}} &=&\sum_{0\leq k\leq n-1}\left\langle f_{k},M_{\omega
}T_{x}h_{k}\right\rangle _{L^{2}(%
\mathbb{R}
^{d})}  \label{1} \\
&=&\sum_{0\leq k\leq n-1}e^{i\pi x\omega -\frac{\pi }{2}\left\vert
z\right\vert ^{2}}(\mathcal{B}^{k}f_{k})(z)  \notag \\
&=&e^{i\pi x\omega -\frac{\pi }{2}\left\vert z\right\vert ^{2}}(\mathbf{B}%
^{n}\mathbf{f})(z)\text{.}  \label{3}
\end{eqnarray}%
Therefore, setting $F=\mathcal{B}^{n}\mathbf{f}$, the unitarity of $\mathbf{B%
}^{n}$ shows that\ the inequalities (\ref{superframe}) are equivalent to (%
\ref{sampling}).
\end{proof}

\subsection{Main result}

We will need the concept of \emph{Beurling density} of a sequence.

Let $I$ be a compact set of measure $1$ in the complex plane and let $%
n^{-}(r)$ denote the smallest (and $n^{+}(r)$ the biggest) number of points
from $\Gamma $ to be found in a translate of $rI$. We define the \emph{lower}
and the \emph{upper} Beurling density of $\Gamma $ to be%
\begin{equation*}
D^{-}(\Gamma )=\lim_{r\rightarrow \infty }\sup \frac{n^{-}(r)}{r^{2}}\text{
and }D^{+}(\Gamma )=\lim_{r\rightarrow \infty }\sup \frac{n^{+}(r)}{r^{2}}%
\text{,}
\end{equation*}%
respectively. When $\Gamma $ is associated with the lattice $\Lambda $, $%
D^{-}(\Gamma )=D^{+}(\Gamma )=D(\Gamma )=D(\Lambda )$.

We will now use the following result, which is Theorem 2.2 in \cite%
{Brekkeseip}. Observe that we can remove the uniformly discrete condition
from the statement in \cite{Brekkeseip} since we are dealing only with
lattices.%
\begin{equation*}
\end{equation*}%
\textbf{Theorem B. }The sequence\textbf{\ }$\Gamma _{n}$ is a multiple
interpolating lattice sequence in the Fock space $\mathcal{F}(%
\mathbb{C}
)$ if and only if $D(\Gamma _{n})<1$.%
\begin{equation*}
\end{equation*}

From this we obtain the characterization of sampling lattices in $\mathbf{F}%
^{n}(%
\mathbb{C}
)$.

\begin{theorem}
The lattice $\Gamma $ is a sampling sequence for $\mathbf{F}^{n}(%
\mathbb{C}
)$ if and only if $D(\Gamma )>n$.
\end{theorem}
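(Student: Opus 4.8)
The plan relies on chaining together the equivalences already established, so that the final density statement follows from a single, well-understood result in the literature. The goal is to prove that $\Gamma$ is sampling for $\mathbf{F}^{n}(\mathbb{C})$ if and only if $D(\Gamma)>n$.

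First I would invoke Lemma 2 to translate the sampling property of $\Gamma$ in $\mathbf{F}^{n}(\mathbb{C})$ into the statement that $\mathcal{G}(\mathbf{h}_{n},\Lambda)$ is a Gabor superframe for $\mathcal{H}=L^{2}(\mathbb{R},\mathbb{C}^{n})$. This is where the unitarity of the polyanalytic Bargmann transform $\mathbf{B}^{n}$ (Theorem 2) does the essential work, converting a statement about point evaluations of polyanalytic functions into a statement about time-frequency shifts acting on vector-valued windows.

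Next I would apply Theorem A (the vector-valued Janssen--Ron--Shen duality) to recast the superframe condition as the assertion that the union of Gabor systems $\cup_{k=0}^{n-1}\mathcal{G}(h_{k},\Lambda^{0})$ is a Riesz sequence for $L^{2}(\mathbb{R})$, now indexed by the \emph{adjoint} lattice $\Lambda^{0}$. Then Lemma 1, applied with $\Lambda^{0}$ in place of $\Lambda$, identifies this Riesz-sequence property with $\Gamma_{n}^{0}$ being a multiple interpolating sequence in the analytic Fock space $\mathcal{F}(\mathbb{C})$. At this stage the chain of equivalences is exactly Theorem 3, so the whole problem has been reduced to a multiple interpolation problem for analytic functions.

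Finally I would invoke Theorem B, which asserts that $\Gamma_{n}^{0}$ is a multiple interpolating lattice sequence in $\mathcal{F}(\mathbb{C})$ if and only if $D(\Gamma_{n}^{0})<1$. The computation that finishes the proof is purely arithmetic: since $\Gamma_{n}^{0}$ consists of $n$ copies of $\Gamma^{0}$, its density is $D(\Gamma_{n}^{0})=n\,D(\Gamma^{0})$, and by the adjoint-lattice relation $D(\Gamma^{0})=1/D(\Gamma)$ we get $D(\Gamma_{n}^{0})=n/D(\Gamma)$. Thus $D(\Gamma_{n}^{0})<1$ is equivalent to $D(\Gamma)>n$, which is the claim. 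The main subtlety to watch is the careful bookkeeping of densities under passage to the adjoint lattice and under the $n$-fold repetition: one must correctly combine $D(\Lambda^{0})=1/D(\Lambda)$ with the factor of $n$ coming from the multiplicity of the sampling/interpolation problem. Everything else is a transport of structure through the unitary maps and duality theorems established earlier.
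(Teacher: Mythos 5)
Your proposal is correct and follows essentially the same route as the paper: the paper's proof of this theorem invokes the duality principle (Theorem 3, itself assembled from Lemma 1, Lemma 2 and Theorem A exactly as you unpack it), then applies Theorem B together with the density identities $D(\Gamma_{n}^{0})=nD(\Gamma^{0})$ and $D(\Gamma^{0})=1/D(\Gamma)$. The only difference is presentational — you re-derive the chain of equivalences inline rather than citing Theorem 3 as a black box — and your density bookkeeping matches the paper's.
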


\begin{proof}
We know by the duality principle that $\Gamma $ is a sampling sequence for $%
\mathbf{F}^{n}(%
\mathbb{C}
)$ if and only if the adjoint sequence $\Gamma _{n}^{0}$ is a multiple
interpolating sequence in the Fock space $\mathcal{F}(%
\mathbb{C}
)$. By definition of Beurling density, it is obvious that%
\begin{equation*}
D(\Gamma _{n}^{0})=nD(\Gamma ^{0})\text{.}
\end{equation*}%
Therefore, Theorem B states that $\Gamma ^{0}$ is a multiple interpolating
sequence in the Fock space $\mathcal{F}(%
\mathbb{C}
)$ if and only if $D(\Gamma ^{0})<\frac{1}{n}.$ Using (\ref{reldensities}),
we conclude that $\Gamma $ is a sampling sequence for $\mathbf{F}^{n}(%
\mathbb{C}
)$ if and only if $D(\Gamma )>n$.
\end{proof}

Using Lemma 1, we recover Theorem 1.1 of \cite{CharlyYurasuper}.

\begin{corollary}
Let $\mathbf{h}_{n}=(h_{0},...,h_{n-1})$. Then the set $\mathcal{G}(\mathbf{h%
}_{n},\Lambda )$ is a Gabor super frame for $\mathcal{H}=L^{2}(%
\mathbb{R}
,%
\mathbb{C}
^{n})$ if and only if $D(\Gamma )>n$.
\end{corollary}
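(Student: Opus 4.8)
The plan is to obtain the corollary by simply composing two results already in hand, specialized to the case $d=1$. First I would invoke the Lemma that identifies Gabor superframes with Hermite windows and sampling sequences: it asserts that $\mathcal{G}(\mathbf{h}_{n},\Lambda )$ is a Gabor superframe for $\mathcal{H}=L^{2}(\mathbb{R},\mathbb{C}^{n})$ if and only if the associated complex sequence $\Gamma$ is a sampling sequence for $\mathbf{F}^{n}(\mathbb{C})$. The content of that Lemma is exactly the computation in identities (\ref{1})--(\ref{3}), which shows that $\langle \mathbf{f},M_{\omega }T_{x}\mathbf{h}_{n}\rangle _{\mathcal{H}}=e^{i\pi x\omega -\frac{\pi }{2}|z|^{2}}(\mathbf{B}^{n}\mathbf{f})(z)$; combined with the unitarity of the polyanalytic Bargmann transform $\mathbf{B}^{n}$ (Theorem 2), this transports the superframe inequalities (\ref{superframe}) verbatim into the sampling inequalities (\ref{sampling}), with the same frame bounds $A,B$.

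Second, I would apply Theorem 4, the characterization of sampling lattices, which states that $\Gamma$ is a sampling sequence for $\mathbf{F}^{n}(\mathbb{C})$ if and only if $D(\Gamma )>n$. Chaining this equivalence with the one above yields at once that $\mathcal{G}(\mathbf{h}_{n},\Lambda )$ is a Gabor superframe for $L^{2}(\mathbb{R},\mathbb{C}^{n})$ if and only if $D(\Gamma )>n$, which is the assertion and recovers Theorem 1.1 of \cite{CharlyYurasuper}.

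Since both ingredients are already proved, there is no genuine analytical obstacle here: the corollary is nothing more than the Gabor-analytic restatement of Theorem 4. The only points requiring attention are bookkeeping ones, namely that the density $D(\Gamma )$ attached to the complex sequence coincides with $D(\Lambda )$ of the underlying lattice (as recorded in the definitions of \secref{}) and that the restriction to $d=1$ is legitimate, which it is because both the Lemma (valid in arbitrary dimension) and Theorem 4 are available there, the latter being exactly where the one-dimensional hypothesis is used.
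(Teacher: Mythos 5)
Your proposal is correct and is essentially the paper's own argument: the author obtains this corollary by combining the lemma identifying Gabor superframes with Hermite windows with sampling sequences for $\mathbf{F}^{n}(\mathbb{C})$ (via the identity $\left\langle \mathbf{f},M_{\omega }T_{x}\mathbf{h}_{n}\right\rangle _{\mathcal{H}}=e^{i\pi x\omega -\frac{\pi }{2}\left\vert z\right\vert ^{2}}(\mathbf{B}^{n}\mathbf{f})(z)$ and the unitarity of $\mathbf{B}^{n}$) with the density characterization of Theorem 4, exactly as you do. The only discrepancy is cosmetic: the paper cites ``Lemma 1'' at this point, but the content it actually uses is the superframe--sampling lemma (Lemma 2), which is the one you correctly invoke.
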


\section{Interpolation in $\mathbf{F}^{n}(%
\mathbb{C}
)$}

\subsection{Definitions}

\begin{definition}
The sequence $\Gamma $ is an interpolating sequence for $\mathbf{F}^{n}(%
\mathbb{C}
^{d})$ if, for every sequence $\{\alpha _{i,j}\}\in l^{2}$, there exists $%
F\in \mathbf{F}^{n}(%
\mathbb{C}
^{d})$ such that $e^{i\pi x\omega -\frac{\pi }{2}\left\vert z\right\vert
^{2}}F(z)=\alpha _{i,j},$ for every $z\in \Gamma $.
\end{definition}

\begin{definition}
The sequence $\Gamma _{n}$, consisting of $n$ copies of $\Gamma $ is is said
to be a multiple sampling sequence for $\mathcal{F}(%
\mathbb{C}
^{d})$ if there exist numbers $A$ and $B$ such that%
\begin{equation}
A\left\Vert F\right\Vert _{\mathcal{F}(%
\mathbb{C}
^{d})}^{2}\leq \sum_{z\in \Gamma }\sum_{0\leq k\leq n-1}\left\vert
\left\langle F,\beta _{z}e_{k}\right\rangle \right\vert ^{2}\leq B\left\Vert
F\right\Vert _{\mathcal{F}(%
\mathbb{C}
^{d})}^{2}.  \label{multisampling}
\end{equation}
\end{definition}

\begin{definition}
The set $\cup _{k=0}^{n-1}\mathcal{G}(g_{k},\Lambda )$ is said to generate a
\emph{Gabor multi-frame} in $L^{2}(%
\mathbb{R}
^{d})$\ if there exist constants $A$ and $B$ such that, for every $f\in
L^{2}(%
\mathbb{R}
^{d})$,%
\begin{equation}
A\left\Vert f\right\Vert _{L^{2}(%
\mathbb{R}
^{d})}^{2}\leq \sum_{(x,\omega )\in \Lambda }\sum_{0\leq k\leq
n-1}\left\vert \left\langle f,M_{\omega }T_{x}g_{k}\right\rangle _{L^{2}(%
\mathbb{R}
^{d})}\right\vert ^{2}\leq B\left\Vert f\right\Vert _{L^{2}(%
\mathbb{R}
^{d})}^{2}.  \label{multiframe}
\end{equation}
\end{definition}

The dual of the duality principle contained in Theorem A is now required. As
stated at the end of \cite{CharlyIMRN}, it reads as follows:%
\begin{equation*}
\end{equation*}%
\textbf{Theorem C. }The set $\mathcal{G}(\mathbf{g},\Lambda )$ is a Riesz
sequence for $L^{2}(%
\mathbb{R}
^{d})$ if and only if $\cup _{k=0}^{n-1}\mathcal{G}(g_{k},\Lambda ^{0})$ is
a Gabor multi-frame in $L^{2}(%
\mathbb{R}
^{d})$.

\subsection{Duality principle}

Now we prove the following duality.

\begin{theorem}
The sequence $\Gamma $ is an interpolating sequence for $\mathbf{F}^{n}(%
\mathbb{C}
^{d})$ if and only if $\Gamma _{n}^{0}$ is a multiple sampling sequence for $%
\mathcal{F}(%
\mathbb{C}
^{d})$.
\end{theorem}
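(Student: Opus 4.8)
The plan is to mirror the proof of Theorem 3 exactly, "dualizing" every step. This proof will establish that interpolation in $\mathbf{F}^{n}(\mathbb{C}^{d})$ is equivalent to a multiple sampling problem in the analytic Fock space, using Theorem C (the dual of the Janssen-Ron-Shen duality) in place of Theorem A. The key observation is that the three ingredients of the sampling proof all have natural duals: a Gabor superframe becomes a vector-valued Riesz sequence, a union of Riesz systems becomes a Gabor multi-frame, and multiple interpolation becomes multiple sampling in $\mathcal{F}(\mathbb{C}^{d})$.

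First I would prove an analogue of Lemma 2. Using the polyanalytic Bargmann transform $\mathbf{B}^{n}$ and the computation \eqref{1}--\eqref{3}, the same chain of identities that identified the superframe inequalities with the sampling inequalities now identifies the Riesz-sequence condition for $\mathcal{G}(\mathbf{h}_{n},\Lambda)$ with the interpolation condition for $\Gamma$ in $\mathbf{F}^{n}(\mathbb{C}^{d})$. Concretely, $\mathcal{G}(\mathbf{h}_{n},\Lambda)$ is a Riesz sequence in $\mathcal{H}$ precisely when, for every $l^{2}$ sequence $\{\alpha_{i,j}\}$, one can solve $\langle \mathbf{f}, M_{\omega}T_{x}\mathbf{h}_{n}\rangle_{\mathcal{H}} = \alpha_{i,j}$; pushing this through \eqref{3} and the unitarity of $\mathbf{B}^{n}$, with $F = \mathbf{B}^{n}\mathbf{f}$, turns it into the interpolation identity $e^{i\pi x\omega - \frac{\pi}{2}|z|^{2}}F(z) = \alpha_{i,j}$, which is exactly Definition 8.

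Second I would prove the dual of Lemma 1: the Gabor multi-frame $\cup_{k=0}^{n-1}\mathcal{G}(h_{k},\Lambda)$ in $L^{2}(\mathbb{R}^{d})$ is a multi-frame if and only if $\Gamma_{n}$ is a multiple sampling sequence for $\mathcal{F}(\mathbb{C}^{d})$. As in Lemma 1, this comes from the unitarity of the ordinary Bargmann transform $\mathcal{B}$ together with the intertwining property \eqref{intertwining}, which gives $\langle f, M_{\omega}T_{x}h_{k}\rangle = \langle \mathcal{B}f, \beta_{z}e_{k}\rangle$; substituting $F = \mathcal{B}f$ carries the multi-frame inequalities \eqref{multiframe} term-by-term into the multiple-sampling inequalities \eqref{multisampling}. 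With both lemmas in hand, Theorem C supplies the missing link: $\mathcal{G}(\mathbf{h}_{n},\Lambda)$ is a Riesz sequence for $L^{2}(\mathbb{R}^{d})$ if and only if $\cup_{k=0}^{n-1}\mathcal{G}(h_{k},\Lambda^{0})$ is a Gabor multi-frame. Chaining the first lemma, Theorem C, and the second lemma (applied on the adjoint lattice $\Lambda^{0}$, so that the relevant copied sequence is $\Gamma_{n}^{0}$) yields the stated equivalence.

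I do not expect a serious obstacle here, since the whole argument is a formal transposition of the one already carried out for Theorem 3, with Theorem C replacing Theorem A. The point that most deserves care is bookkeeping on the adjoint lattice: I must make sure that the multiple sampling sequence produced really is $\Gamma_{n}^{0}$ (that is, $n$ copies of the complex sequence associated to $\Lambda^{0}$) and not $\Gamma_{n}$, which amounts to tracking which lattice plays the role of the "window" side versus the "sampling" side in Theorem C. The only genuinely structural input beyond unitarity of the two Bargmann transforms is the correct statement and applicability of Theorem C, so I would be slightly cautious that its hypotheses (lattice setting, Hermite windows) match our situation, but this is exactly parallel to how Theorem A was invoked for the sampling direction.
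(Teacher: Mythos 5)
Your proposal is correct and follows essentially the same route as the paper: the two lemmas you describe are precisely the paper's Lemma 3 (multi-frame for $\cup_{k}\mathcal{G}(h_{k},\Lambda)$ equivalent to multiple sampling for $\Gamma_{n}$, via unitarity of $\mathcal{B}$ and the intertwining property) and Lemma 4 (interpolation in $\mathbf{F}^{n}(\mathbb{C}^{d})$ equivalent to $\mathcal{G}(\mathbf{h}_{n},\Lambda)$ being a Riesz sequence, via the identities \eqref{1}--\eqref{3} and unitarity of $\mathbf{B}^{n}$), chained through Theorem C exactly as you indicate. Your care about the adjoint lattice bookkeeping, so that the multiple sampling sequence is $\Gamma_{n}^{0}$ rather than $\Gamma_{n}$, matches the paper's use of Theorem C on $\Lambda^{0}$.
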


As in the sampling section, we first prove two Lemmas which, combined with
Theorem C, give the result. The next Lemma requires only the unitarity of
the Bargmann transform.

\begin{lemma}
The set $\cup _{k=0}^{n-1}\mathcal{G}(h_{k},\Lambda )$ is a Gabor
multi-frame in $L^{2}(%
\mathbb{R}
^{d})$\ if and only if $\Gamma _{n}$ is a multiple sampling sequence for $%
\mathcal{F}(%
\mathbb{C}
^{d})$.

\begin{proof}
Similar to Lemma 1: using the unitarity of $\mathcal{B}$ and the
intertwining property (\ref{intertwining}) gives $\left\langle f,M_{\omega
}T_{x}h_{k}\right\rangle =\left\langle \mathcal{B}f,\beta
_{z}e_{k}\right\rangle $; setting $F=\mathcal{B}f$ it follows from the
unitarity of the Bargmann transform that (\ref{multisampling}) and (\ref%
{multiframe}) are equivalent.
\end{proof}
\end{lemma}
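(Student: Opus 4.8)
To prove that $\cup_{k=0}^{n-1}\mathcal{G}(h_{k},\Lambda)$ is a Gabor multi-frame in $L^{2}(\mathbb{R}^{d})$ if and only if $\Gamma_{n}$ is a multiple sampling sequence for $\mathcal{F}(\mathbb{C}^{d})$, the plan is to argue exactly as the \emph{dual} of Lemma 1, replacing the Riesz/interpolation pair there by the frame/sampling pair here. The only analytic input required is the unitarity of the ordinary Bargmann transform $\mathcal{B}$ together with the intertwining relation (\ref{intertwining}); the polyanalytic transform $\mathbf{B}^{n}$ is \emph{not} needed, since each window $h_{k}$ is handled individually. First I would record the one-line translation identity for a single Hermite window. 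Since $\mathcal{B}h_{k}=e_{k}$ by (\ref{BargHermite}) and $\beta_{z}\mathcal{B}=\mathcal{B}M_{\omega}T_{x}$ with $z=x+i\omega$ by (\ref{intertwining}), unitarity of $\mathcal{B}$ gives, for every $f\in L^{2}(\mathbb{R}^{d})$,
\[
\langle f,M_{\omega}T_{x}h_{k}\rangle_{L^{2}(\mathbb{R}^{d})}=\langle \mathcal{B}f,\mathcal{B}M_{\omega}T_{x}h_{k}\rangle_{\mathcal{F}(\mathbb{C}^{d})}=\langle \mathcal{B}f,\beta_{z}e_{k}\rangle_{\mathcal{F}(\mathbb{C}^{d})}.
\]

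Next I would set $F=\mathcal{B}f$ and compare the two defining double inequalities term by term. The multi-frame inequality (\ref{multiframe}) reads
\[
A\|f\|_{L^{2}}^{2}\leq \sum_{(x,\omega)\in\Lambda}\sum_{0\leq k\leq n-1}|\langle f,M_{\omega}T_{x}h_{k}\rangle|^{2}\leq B\|f\|_{L^{2}}^{2},
\]
while the multiple-sampling inequality (\ref{multisampling}) reads
\[
A\|F\|_{\mathcal{F}}^{2}\leq \sum_{z\in\Gamma}\sum_{0\leq k\leq n-1}|\langle F,\beta_{z}e_{k}\rangle|^{2}\leq B\|F\|_{\mathcal{F}}^{2}.
\]
By the translation identity the two middle sums coincide summand for summand, the pair $\{(x,\omega)\in\Lambda,\ 0\leq k\leq n-1\}$ corresponding precisely to the $n$ copies of $\Gamma$ that make up $\Gamma_{n}$; and by isometry $\|f\|_{L^{2}}=\|\mathcal{B}f\|_{\mathcal{F}}=\|F\|_{\mathcal{F}}$. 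Hence both bracketing inequalities hold with the \emph{same} constants $A,B$. Since $\mathcal{B}$ is onto $\mathcal{F}(\mathbb{C}^{d})$, letting $f$ range over $L^{2}(\mathbb{R}^{d})$ is the same as letting $F$ range over $\mathcal{F}(\mathbb{C}^{d})$, so one family of inequalities is valid for all $f$ exactly when the other is valid for all $F$. This is the asserted equivalence.

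I do not expect a genuine obstacle: the entire content is the single intertwining identity, and everything else is a term-by-term transport of the frame bounds through a unitary map, with the constants preserved. The only points demanding care are bookkeeping ones, namely verifying that $\mathcal{B}h_{k}=e_{k}$ so that the window index matches the Fock basis index $e_{k}$, and confirming that the double index $\{(x,\omega),k\}$ on the Gabor side is correctly identified with the multiplicity-$n$ sequence $\Gamma_{n}$ (the $n$ functionals $\beta_{z}e_{0},\dots,\beta_{z}e_{n-1}$ attached to each $z$ encoding the $n$-fold multiplicity) so that the two sums literally agree. As this is the frame/sampling dual of the Riesz/interpolation statement of Lemma 1, the argument is essentially identical.
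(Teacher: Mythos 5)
Your proposal is correct and follows essentially the same route as the paper: the paper's proof likewise invokes the unitarity of $\mathcal{B}$ and the intertwining property (\ref{intertwining}) to get $\left\langle f,M_{\omega }T_{x}h_{k}\right\rangle =\left\langle \mathcal{B}f,\beta _{z}e_{k}\right\rangle$, sets $F=\mathcal{B}f$, and concludes that (\ref{multisampling}) and (\ref{multiframe}) are equivalent. You merely spell out the term-by-term transport of the frame bounds and the surjectivity of $\mathcal{B}$, which the paper leaves implicit.
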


Again, we make the key connection in the next step, where the unitarity of
the polyanalytic Bargmann transform is required.

\begin{lemma}
The sequence $\Gamma $ is an interpolating sequence for $\mathbf{F}^{n}(%
\mathbb{C}
^{d})$ if and only if $\mathcal{G}(\mathbf{h}_{n},\Lambda )$ is a Riesz
sequence for $\mathcal{H}$.
\end{lemma}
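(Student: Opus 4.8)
The plan is to mirror the proof of Lemma 2, replacing the frame inequalities by the moment-problem (Riesz--Fischer) characterization of Riesz sequences, and then to transport everything through the unitary map $\mathbf{B}^{n}$. First I would record the characterization of the Riesz sequence property that the paper has already used in the scalar case (Lemma 1): since the Hermite windows are Schwartz functions and the index set is a lattice, the system $\mathcal{G}(\mathbf{h}_{n},\Lambda)$ is automatically Bessel, so it is a Riesz sequence for $\mathcal{H}$ precisely when its analysis operator is surjective, i.e. when for every $\{\alpha_{i,j}\}\in l^{2}$ there exists $\mathbf{f}\in\mathcal{H}$ with
\begin{equation*}
\left\langle \mathbf{f},M_{\omega }T_{x}\mathbf{h}_{n}\right\rangle _{\mathcal{H}}=\alpha _{i,j},\qquad (x,\omega )\in \Lambda .
\end{equation*}
This is exactly the vector-valued analogue of the interpolation condition invoked for the union of Gabor systems in Lemma 1.

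Next I would invoke the identity already established in the proof of Lemma 2, namely (\ref{3}), which gives
\begin{equation*}
\left\langle \mathbf{f},M_{\omega }T_{x}\mathbf{h}_{n}\right\rangle _{\mathcal{H}}=e^{i\pi x\omega -\frac{\pi }{2}\left\vert z\right\vert ^{2}}(\mathbf{B}^{n}\mathbf{f})(z),
\end{equation*}
using the definition (\ref{innersuper}) of the inner product, the relation (\ref{rel}), and the definition (\ref{polybargmann}) of the polyanalytic Bargmann transform. Setting $F=\mathbf{B}^{n}\mathbf{f}$, the moment condition $\langle \mathbf{f},M_{\omega }T_{x}\mathbf{h}_{n}\rangle_{\mathcal{H}}=\alpha _{i,j}$ becomes literally $e^{i\pi x\omega -\frac{\pi }{2}\left\vert z\right\vert ^{2}}F(z)=\alpha _{i,j}$ for every $z\in\Gamma$, which is the defining condition for $\Gamma$ to be an interpolating sequence for $\mathbf{F}^{n}(\mathbb{C}^{d})$.

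It then remains to see that solvability of one moment problem is equivalent to solvability of the other, and this is where the unitarity of $\mathbf{B}^{n}$ (Theorem 2) is essential: since $\mathbf{B}^{n}:\mathcal{H}\rightarrow \mathbf{F}^{n}(\mathbb{C}^{d})$ is a bijection, the correspondence $\mathbf{f}\leftrightarrow F$ is one-to-one and onto, so an admissible $\mathbf{f}\in\mathcal{H}$ exists if and only if an admissible $F\in\mathbf{F}^{n}(\mathbb{C}^{d})$ exists. This yields the stated equivalence. The one genuinely conceptual point — rather than the formal transport through $\mathbf{B}^{n}$, which is routine once (\ref{3}) is in hand — is the reduction of the Riesz sequence property to the surjectivity of the analysis operator; I would treat it exactly as in Lemma 1, noting that the automatic Bessel bound for Hermite-window lattice systems is what makes the lower Riesz bound equivalent to the interpolation (moment) condition.
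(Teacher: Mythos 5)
Your argument is essentially identical to the paper's: the author likewise characterizes the Riesz sequence property as solvability of the moment problem $\langle\mathbf{f},M_{\omega}T_{x}\mathbf{h}_{n}\rangle_{\mathcal{H}}=\alpha_{i,j}$, transports it through the identity (\ref{1})--(\ref{3}) and the unitarity of $\mathbf{B}^{n}$, and defers the Bessel-bound issue to a remark invoking $h_{n}\in S_{0}$, exactly as you do inline. The proposal is correct and takes the same route.
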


\begin{proof}
The sequence $\Gamma $ is an interpolating sequence for $\mathbf{F}^{n}(%
\mathbb{C}
^{d})$ if, for every sequence $\{\alpha _{i,j}\}\in l^{2}$, there exists $%
F\in \mathbf{F}^{n}(%
\mathbb{C}
^{d})$ such that $e^{i\pi x\omega -\frac{\pi }{2}\left\vert z\right\vert
^{2}}F(z)=\alpha _{i,j},$ for every $z\in \Gamma $. Using the unitarity of $%
\mathbf{B}^{n}$, we find, for every $F\in \mathbf{F}^{n}(%
\mathbb{C}
^{d})$, a vector valued function $\mathbf{f}\in \mathcal{H}\ $such that $%
\mathbf{B}^{n}\mathbf{f}=F$ or, by (\ref{1})-(\ref{3}),$\ \left\langle
\mathbf{f},M_{\omega }T_{x}\mathbf{h}_{n}\right\rangle _{\mathcal{H}%
}=e^{i\pi x\omega -\frac{\pi }{2}\left\vert z\right\vert ^{2}}F$. Therefore,
the first assertion is equivalent to say that, for every sequence $\{\alpha
_{i,j}\}\in l^{2}$, there exists a $\mathbf{f}\in \mathcal{H}$ such that $%
\left\langle \mathbf{f},M_{\omega }T_{x}\mathbf{h}_{n}\right\rangle _{%
\mathcal{H}}=\alpha _{i,j}$, for every $z\in \Gamma $. This says that $%
\mathcal{G}(\mathbf{h}_{n},\Lambda )$ is a Riesz sequence for $\mathcal{H}$.
\end{proof}

\subsection{Main result}

We will need the following result, which is contained in Theorem 2.1 in \cite%
{Brekkeseip}:%
\begin{equation*}
\end{equation*}

\textbf{Theorem D. }The sequence\textbf{\ }$\Gamma _{n}$ is a multiple
interpolating sequence in the Fock space $\mathcal{F}(%
\mathbb{C}
)$ if and only if $D(\Gamma _{n})<1$.%
\begin{equation*}
\end{equation*}

As before, we can obtain our main result from this one.

\begin{theorem}
The lattice $\Gamma $ is an interpolating sequence for $\mathbf{F}^{n}(%
\mathbb{C}
)$ if and only if $D(\Gamma )<n$.
\end{theorem}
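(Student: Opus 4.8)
The plan is to mirror exactly the argument used for the sampling theorem (Theorem 4), now working on the interpolation side. By the duality principle established in Theorem 5, the sequence $\Gamma$ is an interpolating sequence for $\mathbf{F}^{n}(\mathbb{C})$ if and only if the adjoint sequence $\Gamma_{n}^{0}$ is a multiple sampling sequence for $\mathcal{F}(\mathbb{C})$. So the first move is simply to invoke Theorem 5 to replace the interpolation problem in the polyanalytic space by a multiple sampling problem in the classical analytic Fock space.

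Next I would want to apply Theorem D, which characterizes multiple \emph{interpolating} sequences in $\mathcal{F}(\mathbb{C})$ by the density condition $D(\Gamma_{n})<1$. But Theorem 5 has handed me a multiple \emph{sampling} statement, not a multiple interpolating one, so there is a gap to bridge here. The natural bridge is the known duality between sampling and interpolation in the analytic Fock space $\mathcal{F}(\mathbb{C})$: a sequence is multiply sampling precisely when the complementary density condition holds, in a way dual to the interpolation characterization. Concretely, Theorem 2.1 of \cite{Brekkeseip} (our Theorem D) should be paired with its sampling counterpart so that $\Gamma_{n}^{0}$ is a multiple sampling sequence for $\mathcal{F}(\mathbb{C})$ exactly when $D(\Gamma_{n}^{0})>1$.

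Granting that, the computation is routine and parallels the proof of Theorem 4. By the definition of Beurling density for $n$ superimposed copies, $D(\Gamma_{n}^{0})=nD(\Gamma^{0})$, so the condition $D(\Gamma_{n}^{0})>1$ becomes $D(\Gamma^{0})>\tfrac{1}{n}$. Then I would use the adjoint-lattice density relation (\ref{reldensities}), namely $D(\Gamma^{0})=1/D(\Gamma)$, to translate this into $1/D(\Gamma)>1/n$, that is $D(\Gamma)<n$. Chaining these equivalences gives that $\Gamma$ is an interpolating sequence for $\mathbf{F}^{n}(\mathbb{C})$ if and only if $D(\Gamma)<n$, which is the claim.

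The main obstacle I anticipate is the bridge in the second paragraph: Theorem 5 yields a \emph{sampling} condition while Theorem D is stated for \emph{interpolation}, and these are not literally the same statement. I expect the resolution to rest on the sampling--interpolation duality in the analytic Fock space that is alluded to in the introduction, so the honest version of this proof must either cite the sampling half of \cite{Brekkeseip} directly (the companion to Theorem D, giving multiple sampling $\iff D>1$) or invoke the classical Fock-space duality to convert one into the other. Everything else—the density arithmetic and the application of (\ref{reldensities})—is elementary and should cause no trouble.
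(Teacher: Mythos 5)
Your proposal is correct and follows essentially the same route as the paper: Theorem 5 reduces interpolation in $\mathbf{F}^{n}(\mathbb{C})$ to multiple sampling of $\Gamma_{n}^{0}$ in $\mathcal{F}(\mathbb{C})$, and the identities $D(\Gamma_{n}^{0})=nD(\Gamma^{0})$ and $D(\Gamma^{0})=1/D(\Gamma)$ then yield $D(\Gamma)<n$. The ``gap'' you flag is in fact only a misprint in the paper's statement of Theorem D, which is intended to be the multiple \emph{sampling} characterization from Theorem 2.1 of \cite{Brekkeseip} (multiple sampling if and only if $D(\Gamma_{n})>1$) rather than a second copy of the interpolation statement, so the companion result you propose to cite is precisely the bridge the paper itself uses.
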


\begin{proof}
We know from the duality principle that $\Gamma $ is an interpolating
sequence for $\mathbf{F}^{n}(%
\mathbb{C}
)$ if and only if $\Gamma _{n}$ is a multiple sampling sequence for $%
\mathcal{F}(%
\mathbb{C}
)$. Once again we have $D(\Gamma _{n}^{0})=nD(\Gamma ^{0})$. Therefore,
Theorem D states that $\Gamma ^{0}$ is a multiple interpolating sequence in
the Fock space $\mathcal{F}(%
\mathbb{C}
)$ if and only if $D(\Gamma ^{0})>\frac{1}{n}$. As in Theorem 5 it follows
that $\Gamma $ is an interpolating sequence for $\mathbf{F}^{n}(%
\mathbb{C}
)$ if and only if $D(\Gamma )<n$.
\end{proof}

From this and Lemma 4 we obtain a new result characterizing all the lattices
which generate vector valued Gabor Riesz sequences with Hermite functions$.$
This reveals, at least for lattices, the existence of a critical density for
vector-valued Gabor systems with Hermite functions.

\begin{theorem}
$\mathcal{G}(\mathbf{h}_{n},\Lambda )$ is a Riesz sequence for $\mathcal{H}$
if and only if $D(\Gamma )<n$.
\end{theorem}

\begin{remark}
We should remark that the reason we did not care about the Bessel condition
in the equivalence of the Riesz sequence and interpolating property, used
several times in the previous section, is that the Hermite functions belong
to Feichtinger%
\'{}%
s algebra $S_{0}$ (see \cite{FeiZim},\cite{Fei}):%
\begin{equation*}
\left\Vert h_{n}\right\Vert _{S_{0}}=\int_{%
\mathbb{R}
}\left\vert \left\langle h_{n}\text{,}M_{\omega }T_{x}\varphi \right\rangle
\right\vert dz<\infty \text{,}
\end{equation*}%
where $\varphi $ is the $L^{2}$-normalized Gaussian. As a result they
satisfy the Bessel condition \cite[theorem 12]{Heil}.
\end{remark}

\section{Generalizations, applications and open problems}

\subsection{The super Gabor transform}

The polyanalytic Bargmann transform is an instance of a more general
transform. Although it plays no role in the proofs of our main results, we
briefly describe it here for completeness of the picture.

The \emph{super Gabor transform} of a function $\mathbf{f}$ with respect to
the \textquotedblright window\textquotedblright\ $\mathbf{g=(}%
g_{0},...g_{n-1}\mathbf{)}$ is defined, for every $x,\omega \in
\mathbb{R}
^{d}$, as
\begin{equation}
\mathbf{V}_{\mathbf{g}}\mathbf{f}(x,\omega )=\left\langle \mathbf{f,}M%
\mathbf{_{\omega }}T_{x}\mathbf{g}\right\rangle _{\mathcal{H}}=\sum_{0\leq
k\leq n-1}\left\langle f_{k},M_{\omega }T_{x}g_{k}\right\rangle _{L^{2}(%
\mathbb{R}
^{d})}.  \label{supergabor}
\end{equation}

That is to say,%
\begin{equation*}
\mathbf{V}_{\mathbf{g}}\mathbf{f}(x,\omega )=\sum_{0\leq k\leq
n-1}V_{g_{k}}f_{k}(x,\omega )\text{.}
\end{equation*}%
This defines a map%
\begin{equation*}
\mathbf{V}_{\mathbf{g}}\mathbf{f:}\mathcal{H\rightarrow }L^{2}(%
\mathbb{R}
^{2d})\text{.}
\end{equation*}

In the case when the vector $\mathbf{g}$ is extracted from an orthogonal
sequence $\{g_{k}\}_{k\geq 0}$, the essential properties of the Gabor
transform are kept. As an example of how the results concerning Gabor
analysis can be generalized to this setting, we obtain the isometric
properties and the orthogonality relations (the latter valid under the
slightly weaker condition of biorthogonality).

\begin{proposition}
If
\begin{equation}
\left\langle g_{i},g_{j}\right\rangle _{L^{2}(%
\mathbb{R}
^{d})}=\delta _{i,j},  \label{ortg}
\end{equation}%
then $\mathbf{V}_{\mathbf{g}}\mathbf{f}$ is an isometry between Hilbert
spaces, that is%
\begin{equation*}
\left\Vert \mathbf{V}_{g}\mathbf{f}\right\Vert _{L^{2}(%
\mathbb{R}
^{2d})}=\left\Vert \mathbf{f}\right\Vert _{\mathcal{H}}.
\end{equation*}
\end{proposition}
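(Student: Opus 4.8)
The plan is to exploit the orthogonality relations for the ordinary Gabor transform, equation~(\ref{ortogonalityrelations}), together with the defining expression~(\ref{supergabor}) for the super Gabor transform. First I would write out $\left\Vert \mathbf{V}_{\mathbf{g}}\mathbf{f}\right\Vert_{L^{2}(\mathbb{R}^{2d})}^{2}$ as the $L^{2}(\mathbb{R}^{2d})$ inner product of $\mathbf{V}_{\mathbf{g}}\mathbf{f}$ with itself, and then substitute the sum $\mathbf{V}_{\mathbf{g}}\mathbf{f}=\sum_{k}V_{g_{k}}f_{k}$. Expanding the product of the two sums gives a double sum over indices $k$ and $l$ of the cross terms $\left\langle V_{g_{k}}f_{k},V_{g_{l}}f_{l}\right\rangle_{L^{2}(\mathbb{R}^{2d})}$.

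The key step is then to apply the orthogonality relations~(\ref{ortogonalityrelations}) to each cross term, which yields
\begin{equation*}
\left\langle V_{g_{k}}f_{k},V_{g_{l}}f_{l}\right\rangle_{L^{2}(\mathbb{R}^{2d})}=\left\langle f_{k},f_{l}\right\rangle_{L^{2}(\mathbb{R}^{d})}\overline{\left\langle g_{k},g_{l}\right\rangle_{L^{2}(\mathbb{R}^{d})}}.
\end{equation*}
At this point the orthonormality hypothesis~(\ref{ortg}) forces $\overline{\left\langle g_{k},g_{l}\right\rangle}=\delta_{k,l}$, so the off-diagonal terms vanish and the double sum collapses to the diagonal $\sum_{k}\left\langle f_{k},f_{k}\right\rangle_{L^{2}(\mathbb{R}^{d})}=\sum_{k}\left\Vert f_{k}\right\Vert_{L^{2}(\mathbb{R}^{d})}^{2}$. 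By the definition~(\ref{innersuper}) of the inner product on $\mathcal{H}$, this last quantity is exactly $\left\Vert \mathbf{f}\right\Vert_{\mathcal{H}}^{2}$, which establishes the isometry after taking square roots.

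There is no serious obstacle here: the proof is essentially a one-line computation once the orthogonality relations are invoked, and it parallels the derivation of the scalar Gabor isometry~(\ref{Gabor isometry}) from~(\ref{ortogonalityrelations}). The only point requiring a word of care is the interchange needed to treat $\mathbf{V}_{\mathbf{g}}\mathbf{f}$ as a genuine element of $L^{2}(\mathbb{R}^{2d})$ so that its norm is the sum of the component inner products; this is justified because each $V_{g_{k}}f_{k}$ already lies in $L^{2}(\mathbb{R}^{2d})$ by the orthogonality relations, hence so does their finite sum. The remark in the statement that the orthogonality relations extend under mere biorthogonality is worth noting: the same computation shows that if $\{g_{k}\}$ and a second family $\{\widetilde{g}_{k}\}$ satisfy $\left\langle g_{k},\widetilde{g}_{l}\right\rangle=\delta_{k,l}$, then $\left\langle \mathbf{V}_{\mathbf{g}}\mathbf{f},\mathbf{V}_{\widetilde{\mathbf{g}}}\mathbf{h}\right\rangle_{L^{2}(\mathbb{R}^{2d})}=\left\langle \mathbf{f},\mathbf{h}\right\rangle_{\mathcal{H}}$, so the isometry is the special case $\mathbf{g}=\widetilde{\mathbf{g}}$, $\mathbf{h}=\mathbf{f}$.
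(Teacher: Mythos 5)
Your proposal is correct and follows essentially the same route as the paper: expand $\left\Vert \mathbf{V}_{\mathbf{g}}\mathbf{f}\right\Vert _{L^{2}(\mathbb{R}^{2d})}^{2}$ into cross terms, kill the off-diagonal ones using the orthogonality relations~(\ref{ortogonalityrelations}) together with~(\ref{ortg}), and identify the surviving diagonal sum with $\left\Vert \mathbf{f}\right\Vert _{\mathcal{H}}^{2}$ via~(\ref{innersuper}). The paper merely compresses the cross-term cancellation into its first displayed line and invokes~(\ref{Gabor isometry}) for the diagonal terms, so your write-up is just a slightly more explicit version of the same computation.
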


\begin{proof}
Using (\ref{ortg}) and (\ref{Gabor isometry}) gives%
\begin{eqnarray*}
\left\Vert \mathbf{V}_{g}\mathbf{f}\right\Vert _{L^{2}(%
\mathbb{R}
^{2d})}^{2} &=&\sum_{0\leq k\leq n-1}\left\langle V_{g_{k}}f_{k}\mathbf{,}%
V_{g_{k}}f_{k}\right\rangle _{L^{2}(%
\mathbb{R}
^{2d})} \\
&=&\sum_{0\leq k\leq n-1}\left\Vert V_{g_{k}}f_{k}\right\Vert _{L^{2}(%
\mathbb{R}
^{2d})}^{2} \\
&=&\sum_{0\leq k\leq n-1}\left\Vert f_{k}\right\Vert _{L^{2}(%
\mathbb{R}
^{d})}^{2} \\
&=&\left\Vert \mathbf{f}\right\Vert _{\mathcal{H}}^{2}.
\end{eqnarray*}
\end{proof}

The orthogonality relations are valid under the slightly weaker condition of
biorthogonality.

\begin{proposition}
If%
\begin{equation}
\left\langle g_{1,i},g_{2,j}\right\rangle _{L^{2}(%
\mathbb{R}
^{d})}=\delta _{i,j},  \label{biort}
\end{equation}%
then $\mathbf{V}_{g}\mathbf{f}$ satisfies%
\begin{equation}
\left\langle \mathbf{V}_{g_{1,i}}\mathbf{f}_{1},\mathbf{V}_{g_{2,j}}\mathbf{f%
}_{2}\right\rangle _{L^{2}(%
\mathbb{R}
^{2d})}=\left\langle \mathbf{f}_{1}\mathbf{,f}_{2}\right\rangle _{\mathcal{H}%
}\text{.}  \label{vectorort}
\end{equation}
\end{proposition}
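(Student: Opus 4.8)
The plan is to imitate the proof of the preceding Proposition, replacing the single isometry (\ref{Gabor isometry}) by the full orthogonality relations (\ref{ortogonalityrelations}) for the short-time Fourier transform. Writing the two vector windows as $\mathbf{g}_{1}=(g_{1,0},\ldots ,g_{1,n-1})$ and $\mathbf{g}_{2}=(g_{2,0},\ldots ,g_{2,n-1})$, I would first expand each super Gabor transform componentwise by its definition (\ref{supergabor}), so that $\mathbf{V}_{\mathbf{g}_{1}}\mathbf{f}_{1}=\sum_{0\leq i\leq n-1}V_{g_{1,i}}f_{1,i}$ and similarly for $\mathbf{V}_{\mathbf{g}_{2}}\mathbf{f}_{2}$. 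By sesquilinearity of the $L^{2}(\mathbb{R}^{2d})$ inner product, the left-hand side of (\ref{vectorort}) becomes the double sum $\sum_{i,j}\langle V_{g_{1,i}}f_{1,i},V_{g_{2,j}}f_{2,j}\rangle _{L^{2}(\mathbb{R}^{2d})}$.

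Next I would apply (\ref{ortogonalityrelations}) term by term, factoring each summand as $\langle f_{1,i},f_{2,j}\rangle _{L^{2}(\mathbb{R}^{d})}\,\overline{\langle g_{1,i},g_{2,j}\rangle _{L^{2}(\mathbb{R}^{d})}}$. The biorthogonality hypothesis (\ref{biort}) gives $\langle g_{1,i},g_{2,j}\rangle =\delta _{i,j}$; since $\delta _{i,j}$ is real the conjugation is harmless, so the double sum collapses to the diagonal $\sum_{i}\langle f_{1,i},f_{2,i}\rangle _{L^{2}(\mathbb{R}^{d})}$. By the definition (\ref{innersuper}) of the inner product on $\mathcal{H}$ this equals $\langle \mathbf{f}_{1},\mathbf{f}_{2}\rangle _{\mathcal{H}}$, which is exactly (\ref{vectorort}).

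There is no genuine obstacle here; the argument is entirely routine once the orthogonality relations are available, and it is the direct analogue of the computation already carried out for the isometry. The only point deserving a moment's care is the complex conjugate attached to the window inner product in (\ref{ortogonalityrelations}), but because biorthogonality produces the real quantity $\delta _{i,j}$ this conjugation never intervenes. It is also worth noting why the weaker hypothesis (\ref{biort}) suffices in place of the orthonormality (\ref{ortg}) used for the isometry: only the vanishing of the off-diagonal cross inner products and the unit normalization of the diagonal ones are needed to reduce the double sum to $\langle \mathbf{f}_{1},\mathbf{f}_{2}\rangle _{\mathcal{H}}$, and both of these are furnished directly by $\langle g_{1,i},g_{2,j}\rangle =\delta _{i,j}$.
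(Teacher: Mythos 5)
Your argument is correct and is essentially the same computation the paper performs: expand both super Gabor transforms componentwise, apply the orthogonality relations for the short-time Fourier transform to each term, and use biorthogonality to collapse the double sum to the diagonal, recovering the $\mathcal{H}$ inner product. You even supply a small precision the paper glosses over, namely that the step really invokes the orthogonality relations (\ref{ortogonalityrelations}) rather than the isometry (\ref{Gabor isometry}) cited in the paper's one-line proof.
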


\begin{proof}
Using (\ref{biort}) and (\ref{Gabor isometry}) gives%
\begin{equation*}
\left\langle \mathbf{V}_{g_{1,i}}\mathbf{f}_{1},\mathbf{V}_{g_{2,j}}\mathbf{f%
}_{2}\right\rangle _{L^{2}(%
\mathbb{R}
^{2d})}=\sum_{0\leq k\leq n-1}\left\langle \mathbf{f}_{1,k},\mathbf{f}%
_{2,k}\right\rangle _{L^{2}(%
\mathbb{R}
^{2d})}=\left\langle \mathbf{f}_{1}\mathbf{,f}_{2}\right\rangle _{\mathcal{H}%
}\text{.}
\end{equation*}
\end{proof}

Clearly, when we take $\mathbf{g=(}\Phi _{0},...\Phi _{m-1}\mathbf{)}$, we
have the following relation with the polyanalytic Bargmann transform:%
\begin{equation*}
(\mathbf{B}^{n}\mathbf{f})(z)=e^{i\pi x\omega -\pi \frac{\left\vert
z\right\vert ^{2}}{2}}\mathbf{V}_{\mathbf{g}}\mathbf{f}(x,\omega )\text{.}
\end{equation*}%
This observation removes some of the mystery from the previous sections. Now
we are in a position to say that the polyanalytic Bargmann-Fock spaces play
exactly the same role as the Bargmann-Fock space in the scalar case. We have
thus all the basic ingredients to build a theory of vector valued (or super)
Gabor analysis:

\begin{itemize}
\item A vector valued Gabor transform.

\item A discrete theory for $L^{2}(%
\mathbb{R}
^{d})$ frames and Riesz basis.

\item A special vector of windows providing the connection with complex
analysis, where, in the case $d=1$, a Nyquist rate phenomenon can be
observed.
\end{itemize}

The analyzing vector can be extracted from orthogonal systems other than the
Hermite functions, though they probably do not lead to very structured
situations. As a first example we may think of the Haar basis. Other
wavelets may also be used, but we will not pursue this question further in
this paper.

\subsection{Applications}

Although we are here dealing mostly with questions of a conceptual nature,
there are potential applications of these results in multiplexing, an
important method in telecommunications, computer networks and digital video,
as indicated in \cite{Balan} and \cite{CharlyYurasuper}. The idea of
multiplexing is to encode $n$ independent signals $f_{k}\in L^{2}(%
\mathbb{R}
^{d})$ as a single sequence $\mathbf{f}$ that captures the time-frequency
information of each one. With suitable windows\ $\mathbf{g=(}g_{0},...g_{n-1}%
\mathbf{)}$, the time-frequency content of each signal can be measured by
the associated super-Gabor systems. The super Gabor transform $\mathbf{V}_{%
\mathbf{g}}\mathbf{f}(x,\omega )$, gives the precise value one wants to
approximate via the discrete systems.

\subsection{Further questions}

As in the scalar-valued case, when the connection to complex analysis is
missing the lattices generating super-frames should be hard to describe as,
for instance, the case of a "rectangular" window. What we mean by
rectangular window is one obtained from the Haar basis, which is the natural
generalization of the characteristic function of an interval. We wonder how
Janssen%
\'{}%
s tie \cite{J2} would generalize to this situation.

It is still unclear if there is a relation between the Bargmann-Fock space
of polyanalytic functions and vector valued coherent states, as considered
in \cite{Ali} and \cite{AliEngGaz}.

A rather mysterious topic is sampling and interpolation in the true\emph{\ }%
polyanalytic space $\mathcal{F}^{n-1}(%
\mathbb{C}
)$. Partial results follow from ours. Using decomposition (\ref%
{decomposition}), it is easy to see that we obtain two propositions
concerning $\mathcal{F}^{n-1}(%
\mathbb{C}
)$: one, a sufficient condition for sampling, the other a necessary
condition for interpolation.

\begin{proposition}
If $D(\Lambda )>n$ then $\Gamma $ is a sampling sequence for $\mathcal{F}%
^{n-1}(%
\mathbb{C}
)$.
\end{proposition}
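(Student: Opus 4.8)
The plan is to exploit the orthogonal decomposition (\ref{decomposition}), which exhibits $\mathcal{F}^{n-1}(\mathbb{C})$ as a closed subspace of $\mathbf{F}^{n}(\mathbb{C})$, and then simply to restrict the sampling inequality furnished by Theorem 4. The guiding observation is that a two-sided sampling inequality valid on an entire Hilbert space of polyanalytic functions descends automatically, with the very same frame bounds, to any closed subspace on which the norm and the evaluation functionals agree. Thus essentially all of the content is carried by the already-proved characterization, and the passage to the true polyanalytic subspace is a formality.

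First I would invoke Theorem 4: since $D(\Gamma)>n$ by hypothesis, $\Gamma$ is a sampling sequence for $\mathbf{F}^{n}(\mathbb{C})$, so there exist constants $A,B>0$ with
\begin{equation*}
A\|F\|_{\mathbf{F}^{n}(\mathbb{C})}^{2}\leq \sum_{z\in\Gamma}|F(z)|^{2}e^{-\pi|z|^{2}}\leq B\|F\|_{\mathbf{F}^{n}(\mathbb{C})}^{2}
\end{equation*}
for every $F\in\mathbf{F}^{n}(\mathbb{C})$. Next I would record that, by Corollary 1, $\mathcal{F}^{n-1}(\mathbb{C})$ is one of the orthogonal summands of $\mathbf{F}^{n}(\mathbb{C})$; in particular every $F\in\mathcal{F}^{n-1}(\mathbb{C})$ already belongs to $\mathbf{F}^{n}(\mathbb{C})$. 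Since both inner products are given by the same weighted integral $\int_{\mathbb{C}}F\overline{G}\,e^{-\pi|z|^{2}}\,dz$, one has $\|F\|_{\mathcal{F}^{n-1}(\mathbb{C})}=\|F\|_{\mathbf{F}^{n}(\mathbb{C})}$ for such $F$. Specializing the displayed inequality to $F\in\mathcal{F}^{n-1}(\mathbb{C})$ and replacing the $\mathbf{F}^{n}$-norm by the identical $\mathcal{F}^{n-1}$-norm then yields exactly the sampling inequality (\ref{sampling}) for $\mathcal{F}^{n-1}(\mathbb{C})$, with the same $A,B$, which is the assertion.

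I do not expect a genuine obstacle: the norm identification is immediate from the orthogonality of the decomposition (\ref{decomposition}), and no new analysis is required beyond Theorem 4. The one point worth flagging is that the argument is intrinsically one-directional. Sampling on the full space $\mathbf{F}^{n}(\mathbb{C})$ forces sampling on each summand, but a subspace may well be sampled by sparser sequences than the ambient space, so this route gives only a \emph{sufficient} condition and cannot be reversed to a characterization of $\mathcal{F}^{n-1}(\mathbb{C})$. This asymmetry is precisely why the companion statement for interpolation will emerge as a \emph{necessary} condition rather than an equivalence, and it pinpoints why the true polyanalytic spaces remain delicate: the decomposition transfers frame and Riesz bounds only in the "easy" direction.
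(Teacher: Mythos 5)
Your proposal is correct and is essentially the paper's own argument: invoke Theorem 4 to get the sampling inequality on $\mathbf{F}^{n}(\mathbb{C})$ and restrict it to the orthogonal summand $\mathcal{F}^{n-1}(\mathbb{C})$ of the decomposition (\ref{decomposition}), where the norms coincide. In fact your write-up states the inclusion in the correct direction, whereas the paper's one-line proof has the roles of $\mathcal{F}^{n}(\mathbb{C})$ and $\mathbf{F}^{n}(\mathbb{C})$ accidentally swapped.
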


\begin{proof}
This is obvious from decomposition (\ref{decomposition}) because, if $%
D(\Lambda )>n$, then inequality (\ref{sampling}) holds for every $F\in
\mathcal{F}^{n}(%
\mathbb{C}
)$. In particular it also holds for every $F\in \mathbf{F}^{n}(%
\mathbb{C}
)$.
\end{proof}

\begin{proposition}
If $\Gamma $ is an interpolating sequence for $\mathcal{F}^{n-1}(%
\mathbb{C}
)$, then
\begin{equation}
D(\Lambda )<n.  \label{estd}
\end{equation}
\end{proposition}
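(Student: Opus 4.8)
The plan is to reuse the orthogonal decomposition (\ref{decomposition}) in the spirit of the preceding proposition, but with the logical direction reversed to suit interpolation rather than sampling. Recall that
\[
\mathbf{F}^{n}(\mathbb{C})=\mathcal{F}^{0}(\mathbb{C})\oplus \cdots \oplus \mathcal{F}^{n-1}(\mathbb{C}),
\]
so the true polyanalytic space $\mathcal{F}^{n-1}(\mathbb{C})$ is a closed subspace of the full polyanalytic Fock space $\mathbf{F}^{n}(\mathbb{C})$.

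The key observation is that for interpolation the useful inclusion runs opposite to the sampling case. For sampling, a lower frame bound valid on all of $\mathbf{F}^{n}(\mathbb{C})$ restricts automatically to any subspace, which is why $D(\Lambda)>n$ was \emph{sufficient} for sampling $\mathcal{F}^{n-1}(\mathbb{C})$. For interpolation the implication goes from the subspace up to the whole space: if $\Gamma$ is interpolating for $\mathcal{F}^{n-1}(\mathbb{C})$, then for every target $\{\alpha_{i,j}\}\in l^{2}$ there is an $F\in\mathcal{F}^{n-1}(\mathbb{C})$ with $e^{i\pi x\omega-\frac{\pi}{2}|z|^{2}}F(z)=\alpha_{i,j}$ for all $z\in\Gamma$; since $\mathcal{F}^{n-1}(\mathbb{C})\subset\mathbf{F}^{n}(\mathbb{C})$, this very same $F$ realizes the data inside $\mathbf{F}^{n}(\mathbb{C})$. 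Hence $\Gamma$ is an interpolating sequence for $\mathbf{F}^{n}(\mathbb{C})$ as well.

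The conclusion is then immediate from Theorem 6, the density characterization of interpolation in $\mathbf{F}^{n}(\mathbb{C})$: interpolating there forces $D(\Gamma)<n$, and since $D(\Gamma)=D(\Lambda)$ for lattices this is exactly (\ref{estd}). I expect no estimate or density computation beyond what Theorem 6 already supplies. The only subtle point---and the one place where a careless argument could go wrong---is getting the direction of the subspace inclusion right; it is genuinely the reverse of the sampling proposition, reflecting the sampling/interpolation duality that runs through the whole paper.
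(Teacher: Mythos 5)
Your proof is correct and is essentially the paper's own argument: both rest on the inclusion $\mathcal{F}^{n-1}(\mathbb{C})\subset\mathbf{F}^{n}(\mathbb{C})$ coming from the decomposition (\ref{decomposition}) together with Theorem 6. The paper merely phrases it contrapositively (assuming the density is too large and exhibiting a non-interpolable sequence), whereas you run the implication directly; your version is, if anything, slightly cleaner since it automatically handles the boundary case $D(\Lambda)=n$.
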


\begin{proof}
If $D(\Lambda )>n$, then $\Gamma $ is not an interpolating sequence for $%
\mathbf{F}^{n}(%
\mathbb{C}
)$. As a result, there exists $\{\alpha _{i,j}\}\in l^{2}$, such that it is
impossible to find $F\in \mathbf{F}^{n}(%
\mathbb{C}
)$, verifying $e^{i\pi x\omega -\frac{\pi }{2}\left\vert z\right\vert
^{2}}F(z)=\alpha _{i,j}$. Again, from the decomposition (\ref{decomposition}%
), one sees that in particular it is impossible to find $F\in \mathcal{F}%
^{n}(%
\mathbb{C}
)$ verifying $e^{i\pi x\omega -\frac{\pi }{2}\left\vert z\right\vert
^{2}}F(z)=\alpha _{i,j}$.
\end{proof}

We may wonder whether these conditions are sharp.

In the case of Proposition 6 the answer is a definite "no". This is because,
if $\Gamma $ is an interpolating sequence for $\mathcal{F}^{n-1}(%
\mathbb{C}
)$, then $\{\left\langle f,M_{\omega }T_{x}h_{n-1}\right\rangle _{L^{2}(%
\mathbb{R}
)}\}_{(x,w)\in \Lambda }$ is a Riesz basis for $L^{2}(%
\mathbb{R}
)$. As a result, $\{\left\langle f,M_{\omega }T_{x}h_{n-1}\right\rangle
_{L^{2}(%
\mathbb{R}
)}\}_{(x,w)\in \Lambda ^{0}}$ is a frame for $L^{2}(%
\mathbb{R}
)$ and consequently, using Rieffel-Ramanathan-Steger Theorem \cite{Rief},
\cite{RS}, we must have $D(\Lambda ^{0})>1$ and, by scalar Ron-Shen duality,
this implies $D(\Lambda )<1$. Therefore, estimate (\ref{estd}) is far from
being sharp.

Let us look closer at Proposition 5. Here things become rather intriguing.

It is easy to see that the set $\mathcal{G}(h_{n},\Lambda )$ is a Gabor
frame if and only if the associated sequence $\Gamma $ is a sampling
sequence for $\mathcal{F}^{n}(%
\mathbb{C}
)$: since $\left\langle f,M_{\omega }T_{x}h_{n-1}\right\rangle _{L^{2}(%
\mathbb{R}
)}=V_{h_{n}}f(x,w)$, then (\ref{rel}) can be written as
\begin{equation}
\left\langle f,M_{\omega }T_{x}h_{n-1}\right\rangle _{L^{2}(%
\mathbb{R}
)}=e^{i\pi x\omega -\frac{\pi }{2}\left\vert z\right\vert ^{2}}\mathcal{B}%
^{n}f(z).  \label{relsamp}
\end{equation}

By setting $F=\mathcal{B}^{n}f$, the isometry of $\mathcal{B}^{n}:L^{2}(%
\mathbb{R}
)\rightarrow \mathcal{F}^{n}(%
\mathbb{C}
)$ and the relation (\ref{relsamp}) show that definition (\ref{sampling}) is
equivalent to the fact that $\{\left\langle f,M_{\omega
}T_{x}h_{n-1}\right\rangle _{L^{2}(%
\mathbb{R}
)}\}_{(x,w)\in \Lambda }$ is a frame.

Therefore, Proposition 5 is equivalent to the sufficient condition obtained
in \cite{CharlyYura}, where the authors prove that, if $D(\Lambda )>n$, then
$\{\left\langle f,M_{\omega }T_{x}h_{n}\right\rangle _{L^{2}(%
\mathbb{R}
)}\}_{(x,w)\in \Lambda }$ is a frame and give some evidence to support their
conjecture that the result is sharp. If true, this would be quite a
surprising statement, in face of decomposition (\ref{decomposition}).
Moreover, by duality, it would bring estimate (\ref{estd}) down to $%
D(\Lambda )<1/n$.

\textbf{Acknowledgement. }I would like to thank Hans Feichtinger for his
postdoctoral guidance and the NuHAG group at the University of Vienna, where
I first had contact with some of the ideas developed in this paper.
Moreover, specific acknowledgement is due to Karlheinz Gr\"{o}chenig, who
gave generous local seminars with fresh ideas on Gabor frames with Hermite
windows and on the vectorial duality of Gabor frames, and to Yurii
Lyubarskii, who read the first version of the manuscript, providing comments
and corrections which are incorporated in this version. The Referees and the
Editor helped to improve the readability of the paper and they have drawn my
attention to the recent preprint \cite{BCL}. Finally, Radu Balan explained
me the results in \cite{BCL} and gave insightful remarks on the whole topic
while I was preparing the revised version of the manuscript.

\end{document}